\begin{document}

\newtheorem{theorem}{Theorem}
\newtheorem{lemma}[theorem]{Lemma}
\newtheorem{claim}[theorem]{Claim}
\newtheorem{cor}[theorem]{Corollary}
\newtheorem{prop}[theorem]{Proposition}
\newtheorem{definition}{Definition}
\newtheorem{question}[theorem]{Open Question}
\newtheorem{conj}[theorem]{Conjecture}
\newtheorem{prob}{Problem}
\newtheorem{algorithm}[theorem]{Algorithm}

\def\squareforqed{\hbox{\rlap{$\sqcap$}$\sqcup$}}
\def\qed{\ifmmode\squareforqed\else{\unskip\nobreak\hfil
\penalty50\hskip1em\null\nobreak\hfil\squareforqed
\parfillskip=0pt\finalhyphendemerits=0\endgraf}\fi}

\def\cA{{\mathcal A}}
\def\cB{{\mathcal B}}
\def\cC{{\mathcal C}}
\def\cD{{\mathcal D}}
\def\cE{{\mathcal E}}
\def\cF{{\mathcal F}}
\def\cG{{\mathcal G}}
\def\cH{{\mathcal H}}
\def\cI{{\mathcal I}}
\def\cJ{{\mathcal J}}
\def\cK{{\mathcal K}}
\def\cL{{\mathcal L}}
\def\cM{{\mathcal M}}
\def\cN{{\mathcal N}}
\def\cO{{\mathcal O}}
\def\cP{{\mathcal P}}
\def\cQ{{\mathcal Q}}
\def\cR{{\mathcal R}}
\def\cS{{\mathcal S}}
\def\cT{{\mathcal T}}
\def\cU{{\mathcal U}}
\def\cV{{\mathcal V}}
\def\cW{{\mathcal W}}
\def\cX{{\mathcal X}}
\def\cY{{\mathcal Y}}
\def\cZ{{\mathcal Z}}

\def\fI{{\mathfrak I}}
\def\fJ{{\mathfrak J}}

\def\MNL{{\mathfrak M}(N;K,L)}
\def\VNL{V_m(N;K,L)}
\def\RNL{R(N;K,L)}

\def\MNm{{\mathfrak M}_m(N;K)}
\def\VNm{V_m(N;K)}

\def\Xm{\cX_m}

\def \C {{\mathbb C}}
\def \F {{\mathbb F}}
\def \L {{\mathbb L}}
\def \K {{\mathbb K}}
\def \Q {{\mathbb Q}}
\def \Z {{\mathbb Z}}

\def\barG{\overline{\cG}}
\def\\{\cr}
\def\({\left(}
\def\){\right)}
\def\fl#1{\left\lfloor#1\right\rfloor}
\def\rf#1{\left\lceil#1\right\rceil}

\newcommand{\pfrac}[2]{{\left(\frac{#1}{#2}\right)}}

\def\rem{\mathrm{\, rem~}}

\def \Prob{{\mathrm {}}}
\def\e{\mathbf{e}}
\def\ep{{\mathbf{\,e}}_p}
\def\epp{{\mathbf{\,e}}_{p^2}}
\def\em{{\mathbf{\,e}}_m}
\def\eps{\varepsilon}
\def\Res{\mathrm{Res}}
\def\vec#1{\mathbf{#1}}

\def \li {\mathrm {li}\,}

\def\mand{\qquad\mbox{and}\qquad}

\newcommand{\comm}[1]{\marginpar{%
\vskip-\baselineskip 
\raggedright\footnotesize
\itshape\hrule\smallskip#1\par\smallskip\hrule}}

\title{Pseudorandomness and Dynamics of Fermat Quotients}

\author{ 
{\sc Alina~Ostafe}\\
{Institut f\"ur Mathematik, Universit\"at Z\"urich}\\
{Winterthurerstrasse 190 CH-8057, Z\"urich, Switzerland}\\
{\tt alina.ostafe@math.uzh.ch}
\and
{\sc Igor E.~Shparlinski} \\
{Department of Computing, Macquarie University} \\
{Sydney, NSW 2109, Australia} \\
{\tt igor@comp.mq.edu.au}}

\date{}

\maketitle

\begin{abstract} We obtain some theoretic and experimental 
results concerning various  properties (the number of fixed points, 
image distribution, cycle lengths) of the
dynamical system naturally associated with Fermat quotients
acting on the set $\{0, \ldots, p-1\}$. We also consider 
pseudorandom properties of Fermat quotients such as
joint distribution and linear complexity. 
\end{abstract}

\paragraph{Keywords:} \quad Fermat quotients, dynamical systems, orbits,
fixed points, pseudorandomness

\paragraph{AMS Mathematics Subject Classification:} 
\quad  11A07, 11L40, 37A45, 37P25

\section{Introduction}

\subsection{Background}

For a prime $p$ and an integer $u$ with $\gcd(u,p)=1$ 
the {\it Fermat quotient\/} $q_p(u)$ is defined as the unique integer
with 
$$
q_p(u) \equiv \frac{u^{p-1} -1}{p} \pmod p, \qquad 0 \le q_p(u) \le p-1, 
$$
and we also define 
$$
q_p(kp) = 0, \qquad k \in \Z.
$$

It is well-known that the $p$-divisibility of Fermat
quotients $q_p(a)$ by $p$ has numerous applications, which include
the Fermat Last Theorem and squarefreeness testing,
see~\cite{ErnMet2,Fouch,Gran1,Len}.
In particular, the smallest value $\ell_p$ of $u\ge 1$ for which
$q_p(u) \not \equiv 0 \pmod p$ plays a prominent role in
these applications, for which the 
following estimates are given~\cite{BFKS}
$$
\ell_p \le \left\{\begin{array}{lll}
 (\log p)^{463/252 + o(1)}  &\quad \text{for all}\ p, \\
 (\log p)^{5/3 + o(1)}  &\quad \text{for almost all}\ p, 
\end{array}\right.
$$
(where almost all $p$ means for all $p$ but a set of relative density zero), 
which improve the previous estimates of the form
$\ell_p  = O\( (\log p)^2\)$ of~\cite{Fouch,Gran2,Ihara,Len}.
It is widely believed that $\ell_p = 2$ for all primes $p$, 
except for a very thin set of so called {\it Wieferich primes\/},
which one expects $\ell_p =3$ (in particular, it is expected 
that $\ell_p \le 3$ for all primes).  The behaviour (and even the infinitude) 
of Wieferich primes is still very poorly understood, although 
several interesting results, relating Wieferich primes 
to other number theoretic problems are known, see~\cite{GrSo,MoMu,Silv}.

There are also several results about 
the distribution  of Fermat quotients. For instance, 
Heath-Brown~\cite{H-B} has proved that the Fermat quotients  $q_p(u) $ are asymptotically uniformly distributed (after 
scaling by $1/p$ and mapping them into $q_p(u)/p \in [0,1]$) 
for $u = M+1, \ldots, M+N$ for any integers $M$ 
and $N \ge p^{1/2 +\varepsilon}$ for some fixed $\varepsilon$ and
$p\to \infty$. Note that~\cite[Theorem~2]{H-B} gives this 
only for $N  \ge p^{3/4 +\varepsilon}$ but using the full
strength of the Burgess bound one can lower this
threshold down to $h \ge p^{1/2 +\varepsilon}$, see 
Lemma~\ref{lem:HB} below and 
also~\cite[Section~4]{ErnMet2}. 

It is also shown in~\cite[Proposition~2.1]{Fouch} 
that for any integer $a$ the number of 
solutions to the equation  $q_p(u) = a$, $0 \le u < p$, is at most
\begin{equation}
\label{eq:Rep}
\# \{u \in \{0, \ldots, p-1\}\ :\ q_p(u) = a\} \le p^{1/2 + o(1)}. 
\end{equation}

Finally, we also recall several results on congruences 
involving Fermat quotients, see~\cite{AgoSkul,Di-BFa,Sun} and references 
therein.

\subsection{Our results}

Here we consider the dynamical system generated by Fermat quotients.
That is, we fix a sufficiently large prime $p$ 
and, for an   initial value $u_0  \in \{0, \ldots, p-1\}$
we consider the sequence
\begin{equation}
\label{eq:FermDyn}
u_n = q_p(u_{n-1}), \qquad n =1, 2, \ldots\,.
\end{equation}

Clearly, there is some $t$ such that $u_t = u_k$
for some $k < t$. Then   $u_{n+t} = u_{n+k}$ for any $n \ge 0$.
Accordingly, for the smallest value of $t$ with the above
condition,  we call $u_0, \ldots, u_{t-1}$ the orbit 
of the initial value $u_0$. 

Here we address various questions concerning  the 
sequences generated by~\eqref{eq:FermDyn} such as 
the  number of fixed points, image size and 
the ``typical'' orbit length.
In particular, we compare their characteristics with
those expected from random maps, see~\cite{FlOdl}.
All our numerical results support the natural expectation that
the map  
$u\mapsto q_p(u)$ behaves very similar  to a random map on 
the set $\{0, \ldots, p-1\}$.

 We also investigate 
their distribution and other characteristics which are 
relevant to their use as pseudorandom number generators.
As we have mentioned, a result of Heath-Brown~\cite{H-B}
implies that the fractions $q_p(u)/p$ are uniformly 
distributed for $u = M+1, \ldots, M+N$,  
provided that  $N \ge p^{1/2 +\varepsilon}$ for some 
fixed $\varepsilon > 0$. However, the method of~\cite{H-B},
based on bounds of multiplicative character sums, 
such as    the
Polya-Vinogradov  and Burgess bounds,
see~\cite[Theorems~12.5 and~12.6]{IwKow},
does not seem to apply to studying the distribution of
several consecutive elements (as it is essentially equivalent 
to estimating short sums of multiplicative characters 
modulo $p^2$ with polynomial
arguments).
 Here we use a 
different approach, 
to study the distribution of points 
\begin{equation}
\label{eq:Points}
\(\frac{q_p(u)}{p},\ldots, \frac{q_p(u+s-1)}{p}\), \qquad u = M+1, \ldots, M+N, 
\end{equation}
in the $s$-dimensional cube, which is nontrivial 
provided that $N \ge p^{1 +\varepsilon}$ for any fixed real  $\varepsilon > 0$
and integer $s \ge 1$.

We also obtain a nontrivial lower bound on the linear 
complexity of the sequence $q_p(u)$ which is 
also a very important characteristic of any sequence
relevant to its applications to  both cryptography and 
Quasi-Monte Carlo methods,  see~\cite{CDR,MOV,TopWin}. 

Besides theoretic estimates, we also present results
of several numerical tests. Some of these tests 
are based 
on a  modification of an algorithm described 
in~\cite{ErnMet1,ErnMet2}, which seems to be more 
computationally efficient. We also address some
other algorithmic aspects of computation with 
Fermat quotients. In particular, we give asymptotic estimates 
of several new  algorithms which 
we design for this purpose.

We note that all heuristic predictions 
concerning various conjectures about Fermat quotinets
(for example, the expected number of Wieferich primes 
up to $x$ as $x \to \infty$) are based on the assumption
of the pseudorandomness of the map $u \mapsto q_p(u)$. Our results 
provide some theoretic and experimental support to this assumption
which seems to be never  systematically verified prior to our 
work. 

Finally, motivated by the pseudorandom nature of the map 
$u \mapsto q_p(u)$, we also discuss some possibilities of using 
Fermat quotients for designing cryptographically useful
hash functions.

We remark that Smart and Woodcock~\cite{SmWo} 
have considered iterations of a related function
\begin{equation}
\label{eq:L fun}
L_p(u) = \frac{u^{p} -u}{p} 
\end{equation}
in the ring of $p$-adic integers. However,  the setting
of~\cite{SmWo} (where $p$ is fixed, for example $p = 2$)
and our settings where $p$ is the main growing parameter are
very different.

\subsection{Acknowledgement}

The authors are very grateful to Sergei Konyagin for his comments which have led to a significant improvement of the preliminary 
version of Theorem~\ref{thm:Image Size}.
Thanks also go to Daniel Sutantyo for 
his help with Magma programs and Tauno Mets{\"a}nkyl{\"a} for 
his  comments and encouragement. 

During the preparation of this paper,  
A.~O. was supported in part by 
the Swiss National Science Foundation   Grant~121874  
and I.~S. by
the  Australian Research Council 
Grant~DP0556431.

\section{Preparations}

\subsection{General Notation}

Throughout the paper,  $p$  always denotes   prime 
numbers, while $k$, $m$ and $n$ (in both the upper and
lower cases) denote positive integer 
numbers. 

For  integers $a$, $b$ and  $m \ge 1$ with $\gcd(b,m)=1$, 
we write  
$$
c = a/b~\rem m
$$ 
for the unique integer $c$
with $bc \equiv a \pmod m$ and $0 \le c < m$.

We also define
$$
\ep(z) = \exp(2 \pi i z/p).
$$

The implied constants in the symbols `$O$',
and `$\ll$'  
may occasionally depend on an integer parameter $s$
and are absolute otherwise.  
We recall that the notations $U = O(V)$ and $V \ll U$ are both
equivalent to the assertion that the inequality $|U|\le cV$ holds for some
constant $c>0$.

\subsection{Discrepancy and linear complexity} 

Given a sequence $\Gamma$ of $N$ points 
\begin{equation}
\label{eq:GenSequence}
\Gamma = \left\{(\gamma_{n,1}, \ldots, \gamma_{n,s})_{n=0}^{N-1}\right\}
\end{equation}
in the $s$-dimensional unit cube $[0,1)^s$
it is natural to measure the level of its statistical uniformity 
in terms of the {\it discrepancy\/} $\Delta(\Gamma)$. 
More precisely, 
$$
\Delta(\Gamma) = \sup_{B \subseteq [0,1)^s}
\left|\frac{T_\Gamma(B)} {N} - |B|\right|,
$$
where $T_\Gamma(B)$ is the number of points of  $\Gamma$
inside the box
$$
B = [\alpha_1, \beta_1) \times \ldots \times [\alpha_{s}, \beta_{s})
\subseteq [0,1)^s
$$
and the supremum is taken over all such boxes, see~\cite{DrTi,KuNi}.

Typically the bounds on the discrepancy of a 
sequence  are derived from bounds of exponential sums
with elements of this sequence. 
The relation is made explicit in 
 the celebrated {\it Erd\"os-Turan-Koksma
inequality\/}, see~\cite[Theorem~1.21]{DrTi},
which we  present in the following form.

\begin{lemma}
\label{lem:ETK} For any
integer $H > 1$ and any  sequence $\Gamma$ of $N$ points~\eqref{eq:GenSequence}
the discrepancy $\Delta(\Gamma)$
satisfies the following bound:
$$
\Delta(\Gamma) = O\( \frac{1}{H}
+ \frac{1}{N}\sum_{ 0 < |\vec{h}| \le H}  
\prod_{j=1}^s \frac{1}{ |h_j| + 1}
\left| \sum_{n=0}^{N-1} \exp \( 2 \pi i\sum_{j=1}^{s}h_j\gamma_{n,j} \)
\right| \), 
$$
where the sum is taken over all integer vectors
$\vec{h} = (h_1, \ldots, h_s) \in \Z^s$
with $|\vec{h}| = \max_{j = 1, \ldots, s} |h_j| < H$.
\end{lemma}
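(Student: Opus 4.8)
The plan is to follow the classical route for this kind of inequality: approximate the indicator function of a box by trigonometric polynomials of bounded degree, and then transfer the resulting estimate to the Fourier side, where the exponential sums of the lemma appear. Fix a box $B=[\alpha_1,\beta_1)\times\cdots\times[\alpha_s,\beta_s)\subseteq[0,1)^s$, write $P_n=(\gamma_{n,1},\ldots,\gamma_{n,s})$ for the $n$-th point of $\Gamma$, and identify $[0,1)^s$ with the torus. The first step is to find two trigonometric polynomials $S_B^{-}$ and $S_B^{+}$ of degree at most $H$ in each variable that sandwich the indicator function, $S_B^{-}(\vec{x})\le \mathbf{1}_B(\vec{x})\le S_B^{+}(\vec{x})$ for all $\vec{x}$, and whose average is close to the volume, $\int_{[0,1)^s}S_B^{\pm}(\vec{x})\,d\vec{x}=|B|+O(1/H)$. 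Since $T_\Gamma(B)=\sum_{n=0}^{N-1}\mathbf{1}_B(P_n)$, the pointwise bounds immediately give
\[
\frac{1}{N}\sum_{n=0}^{N-1}S_B^{-}(P_n)\ \le\ \frac{T_\Gamma(B)}{N}\ \le\ \frac{1}{N}\sum_{n=0}^{N-1}S_B^{+}(P_n),
\]
so that the difference $T_\Gamma(B)/N-|B|$ is controlled, up to sign, by the two quantities $\frac{1}{N}\sum_n S_B^{\pm}(P_n)-|B|$.

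The second step expands each approximant into its finite Fourier series, $S_B^{\pm}(\vec{x})=\sum_{|\vec{h}|\le H}c^{\pm}(\vec{h})\exp\(2\pi i\sum_{j=1}^s h_jx_j\)$. The constant term is $c^{\pm}(\vec{0})=\int S_B^{\pm}=|B|+O(1/H)$, and evaluating at the points $P_n$ gives
\[
\frac{1}{N}\sum_{n=0}^{N-1}S_B^{\pm}(P_n)-|B|=O\(\frac1H\)+\sum_{0<|\vec{h}|\le H}c^{\pm}(\vec{h})\,\frac{1}{N}\sum_{n=0}^{N-1}\exp\(2\pi i\sum_{j=1}^s h_j\gamma_{n,j}\).
\]
Taking absolute values and using the coefficient estimate $|c^{\pm}(\vec{h})|\ll\prod_{j=1}^s(|h_j|+1)^{-1}$, which I claim the approximants satisfy, reproduces exactly the right-hand side of the lemma. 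Because the resulting bound is uniform in the box $B$, taking the supremum over all $B\subseteq[0,1)^s$ yields the stated estimate for $\Delta(\Gamma)$.

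The heart of the matter, and the step I expect to be the main obstacle, is producing the one-sided approximants $S_B^{\pm}$ enjoying simultaneously the pointwise sandwich, the $L^1$-error of order $1/H$, and the coefficient decay $\prod_j(|h_j|+1)^{-1}$. I would obtain these by tensoring one-dimensional constructions: for each factor $[\alpha_j,\beta_j)\subseteq[0,1)$ use the Beurling--Selberg/Vaaler extremal trigonometric polynomials, which majorize and minorize the indicator of an interval by polynomials of degree at most $H$ whose integral differs from the interval length by $O(1/H)$ and whose $h$-th Fourier coefficient is $O(1/(|h|+1))$; taking the products $\prod_j$ of the upper (respectively lower) one-dimensional approximants yields $S_B^{+}$ (respectively $S_B^{-}$), the multiplicativity of Fourier coefficients delivering the product weight. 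One must be slightly careful that taking products preserves the inequalities: the majorant case is immediate since all factors are nonnegative, while the minorant requires the standard adjustment accounting for factors that may dip below zero. Establishing the extremal-function estimates is technically the heaviest part, but it is entirely classical; for our purposes one could equally replace the sharp Vaaler construction by the cruder smoothing of $\mathbf{1}_B$ against a Fej\'er-type kernel, at the cost of a worse implied constant.
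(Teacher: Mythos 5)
The paper does not prove this lemma: it is the Erd\H{o}s--Tur\'an--Koksma inequality, quoted verbatim from Drmota and Tichy (Theorem~1.21 of that reference), so there is no internal argument to compare yours against. Your proposal is the standard textbook proof of that cited result --- sandwiching $\mathbf{1}_B$ between trigonometric polynomials of degree at most $H$ per variable with $L^1$-error $O(1/H)$ and Fourier coefficients $O\bigl(\prod_j(|h_j|+1)^{-1}\bigr)$, then reading off the bound on the Fourier side --- and it is sound. You correctly identify the only delicate point, namely that the tensor product of one-dimensional minorants need not minorize $\mathbf{1}_B$ because the Selberg minorants can be negative; the usual repair is to take $S_B^{-}=\prod_j M_j-\sum_k (M_k-m_k)\prod_{j\ne k}M_j$ with nonnegative majorants $M_j$ and minorants $m_j$, which is a trigonometric polynomial of the same degree, still minorizes $\mathbf{1}_B$, and only perturbs the constant term by $O(s/H)$ and the remaining coefficients by constants depending on $s$, which the lemma's implied constant absorbs.
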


Finally, we recall that the {\it linear complexity\/} $L$ of 
an $N$-element sequence $s_0, \ldots, s_{N-1}$  in 
a ring $\cR$ is defined as the smallest $L$ such that
$$
s_{u+L}=c_{L-1}s_{u+L-1}+\ldots+c_0s_u, \qquad 
0\le u\le N-L-1,
$$
for some $c_0, \ldots, c_{L-1} \in \cR$, see~\cite{CDR,MOV,TopWin}.

\subsection{Exponential sums}

First, we recall the bound of Heath-Brown~\cite{H-B} on 
exponential sums with $q_p(u)$. Although here we use it 
only with $\nu = 2$ (exactly as it is given in~\cite{H-B})
we formulate it in full generality.

As we have mentioned,   the method of Heath-Brown~\cite{H-B} combined with the
Polya-Vinogradov bound  (when $\nu = 1$)  and the Burgess 
bound (when $\nu \ge 2$), 
see~\cite[Theorems~12.5 and~12.6]{IwKow}, implies the following 
generalisation of~\cite[Theorem~2]{H-B}:

\begin{lemma}\label{lem:HB}
For any fixed integer $\nu \ge 1$, we have
$$
\max_{\gcd(a, p) =1} 
\left|\sum_{u=M+1}^{M+N} 
\ep\(a q_p(u)\) \right| \ll N^{1-1/\nu}p^{(\nu+1)/2\nu ^2+o(1)}, 
$$
as $p\to \infty$, uniformly over $M$ and $N\ge 1$.
\end{lemma}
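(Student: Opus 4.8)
The statement generalizes Heath-Brown's bound on exponential sums with Fermat quotients, replacing his $\nu=2$ (Burgess) input with a general $\nu$. So the plan is to follow Heath-Brown's method as far as reducing the sum to character sums modulo $p^2$, and then feed in whichever multiplicative character sum bound is appropriate for the chosen $\nu$. Let me reconstruct how his reduction goes and where the exponent comes from.

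The key structural identity is the multiplicativity-type relation for Fermat quotients: one has $q_p(uv) \equiv q_p(u) + q_p(v) \pmod p$ whenever $\gcd(uv,p)=1$. This turns the additive character $\ep(a q_p(u))$ into something that factors, and it means that $\ep(a q_p(\cdot))$ behaves like a multiplicative character on $(\Z/p^2\Z)^*$ restricted appropriately. More precisely, the function $u \mapsto \ep(a q_p(u))$ is (the restriction to a suitable range of) a multiplicative character of the cyclic group $(\Z/p^2\Z)^*$ of order $p(p-1)$, namely one of the $p$ characters that are trivial on the order-$(p-1)$ subgroup. So the plan is: first, verify that $u\mapsto \ep(aq_p(u))$ coincides with a nontrivial multiplicative character $\chi$ modulo $p^2$ for $\gcd(a,p)=1$. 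Then the sum $\sum_{u=M+1}^{M+N}\ep(aq_p(u))$ becomes an incomplete multiplicative character sum $\sum_{u=M+1}^{M+N}\chi(u)$ over an interval of length $N$ modulo $q=p^2$.

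With that reduction in hand, the bound is immediate from standard incomplete-character-sum estimates. For the Burgess bound modulo a prime power $q=p^2$, a character sum over an interval of length $N$ is bounded by $N^{1-1/\nu}q^{(\nu+1)/4\nu^2+o(1)}$ for each fixed $\nu$; substituting $q=p^2$ gives $N^{1-1/\nu}p^{(\nu+1)/2\nu^2+o(1)}$, which is exactly the claimed bound. The case $\nu=1$ is the Polya--Vinogradov bound $\sum\chi(u)\ll q^{1/2}\log q = p^{1+o(1)}$, matching the formula at $\nu=1$ (where $N^{1-1/\nu}=N^0=1$). The uniformity over $M$ and $N$ is automatic since the character-sum bounds are uniform in the location and length of the interval, and the maximum over $\gcd(a,p)=1$ is handled uniformly because every such $a$ yields a nontrivial character and the Burgess/Polya--Vinogradov bounds do not depend on which nontrivial character is chosen.

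The only genuine subtlety — and the step I would be most careful about — is the validity of the Burgess bound to the prime-power modulus $p^2$. The classical Burgess bound is cleanest for prime moduli; for general composite moduli it requires the modulus to be cubefree or carries extra restrictions, but for prime-power moduli $p^r$ the bound is known in the required form (this is exactly the regime Heath-Brown invokes for $r=2$, $\nu=2$). I would cite the prime-power Burgess estimate as recorded in~\cite[Theorems~12.5 and~12.6]{IwKow} and note that it holds for all fixed $\nu\ge 1$, so no new analytic input beyond Heath-Brown's is needed; the entire novelty is simply not specializing $\nu$ to $2$. Thus the proof is: establish the character reduction (the arithmetic heart, via the additive property of $q_p$), then quote the $\nu$-parameter character sum bound modulo $p^2$ and simplify the exponent.
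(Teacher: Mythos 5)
Your proposal is correct and is precisely the argument the paper has in mind: the paper gives no written proof of Lemma~\ref{lem:HB}, merely asserting that Heath-Brown's method combined with the Polya--Vinogradov bound ($\nu=1$) and the Burgess bound ($\nu\ge 2$) from~\cite[Theorems~12.5 and~12.6]{IwKow} yields the statement, and your reduction of $u\mapsto\ep(aq_p(u))$ via~\eqref{eq:add struct1} to a nontrivial multiplicative character modulo the cubefree modulus $p^2$, followed by the incomplete character sum bound $N^{1-1/\nu}q^{(\nu+1)/4\nu^2+o(1)}$ with $q=p^2$, is exactly that method with the exponent worked out correctly. The only detail worth a sentence in a full write-up is the handling of the multiples of $p$ in the summation range, where the paper's convention $q_p(kp)=0$ makes the summand equal to $1$ rather than to $\chi(kp)=0$; this contributes $O(N/p+1)$, which is harmless in the range $N\le p^2$ where the lemma is actually applied.
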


We now recall the following well-known bound, 
see~\cite[Bound~(8.6)]{IwKow}.

\begin{lemma}
\label{eq:Incompl}
For any integers $K$ and $r$, we have 
$$
\sum_{k=0}^{K-1} \ep(kr) \ll \min\left\{K, \frac{p}{\|r\|}\right\},
$$
where 
$$
\|r\| = \min_{s \in \Z} |r - sp|
$$ 
is the distance between $r$ and the closest multiple of $p$.
\end{lemma}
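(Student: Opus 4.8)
The statement is the standard estimate for a geometric sum of $p$-th roots of unity, and the plan is to compare the two quantities inside the minimum by treating the two regimes separately. \textbf{The trivial bound} is immediate: the sum $\sum_{k=0}^{K-1}\ep(kr)$ has $K$ terms, each of absolute value $1$, so its modulus is at most $K$. This already covers the case $p \mid r$, where every term equals $1$, the sum equals $K$, and $\|r\|=0$ makes $p/\|r\|$ formally infinite, so the minimum is $K$ and the claimed bound holds.

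The substance is the second regime, $p \nmid r$. Here I would recognise the sum as a finite geometric series with ratio $\zeta = \ep(r) = \exp(2\pi i r/p) \ne 1$, so that
$$
\sum_{k=0}^{K-1}\ep(kr) = \frac{\zeta^K - 1}{\zeta - 1}.
$$
The numerator is bounded crudely by $|\zeta^K - 1| \le |\zeta^K| + 1 = 2$, so everything reduces to a good \emph{lower} bound on $|\zeta - 1|$. Using $|\exp(2\pi i \theta) - 1| = 2|\sin(\pi\theta)|$ with $\theta = r/p$, and writing $r = sp + \rho$ with $|\rho| = \|r\| \le p/2$, periodicity and the symmetry of sine give $|\sin(\pi r/p)| = \sin(\pi\|r\|/p)$, where the argument lies in $[0,\pi/2]$.

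The one genuine inequality I need is the elementary bound $\sin(\pi t) \ge 2t$ for $t \in [0,1/2]$, which follows from the concavity of sine on $[0,\pi/2]$ (the chord from $0$ to $1/2$ lies below the graph). Applying it with $t = \|r\|/p$ yields $|\zeta - 1| = 2\sin(\pi\|r\|/p) \ge 4\|r\|/p$, and therefore
$$
\left|\sum_{k=0}^{K-1}\ep(kr)\right| \le \frac{2}{4\|r\|/p} = \frac{p}{2\|r\|} \ll \frac{p}{\|r\|}.
$$
Combining the trivial bound with this geometric bound gives the minimum as stated. There is no serious obstacle here; the only point requiring care is the passage from $|\sin(\pi r/p)|$ to $\|r\|/p$, i.e.\ correctly identifying the distance to the nearest multiple of $p$ and invoking the concavity estimate, and this is precisely the content of the elementary inequality above.
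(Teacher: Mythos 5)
Your proof is correct and complete. The paper does not prove this lemma at all --- it simply quotes it as Bound~(8.6) of Iwaniec and Kowalski~\cite{IwKow} --- and your argument (summing the geometric series, bounding the numerator by $|\zeta^K-1|\le 2$, and lower-bounding the denominator via $|\zeta-1|=2\sin(\pi\|r\|/p)\ge 4\|r\|/p$ using the chord inequality $\sin(\pi t)\ge 2t$ on $[0,1/2]$) is precisely the standard proof of that bound, with the degenerate case $p\mid r$ handled correctly by the trivial estimate.
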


\subsection{Basic properties of Fermat quotients}

Most of our results are based on the following two
well-known properties of Fermat quotients.

For any integers $k$, $u$ and $v$ with $\gcd(uv,p) = 1$
we have
\begin{equation}
\label{eq:add struct1}
q_p(uv) \equiv q_p(u) + q_p(v) \pmod p
\end{equation}
and
\begin{equation}
\label{eq:add struct2}
q_p(u+kp) \equiv q_p(u) - ku^{-1} \pmod p, 
\end{equation}
see, for example,~\cite[Equations~(2) and~(3)]{ErnMet2}.

\section{Dynamical Properties}

\subsection{Computation of $q_p(u)$}

As we have mentioned, computing each individual 
value of $q_p(u)$ can be done in $O(\log p)$ 
arithmetic operations
on $O(\log p)$-bit integers via repeated squaring computation 
of $u^{p-1}$ modulo $p^2$, we refer to~\cite{vzGG}
for a background on modular arithmetic and 
complexity of various algorithms. 
In particular, one can easily reformulate 
our complexity estimates in terms of 
bit operations.

Thus computing all values of $q_p(u)$, $0 \le u < p$, 
requires $O(p\log p)$  arithmetic operations
on $O(\log p)$-bit integers. Such computation is necessary,
for example, to find all fixed points of the map $u \mapsto q_p(u)$
or for finding the image size. 

Here we show that there is a slightly more efficient
algorithm which is based on~\eqref{eq:add struct1}
and~\eqref{eq:add struct2}.

We assume that we are given a primitive root $g$ modulo $p$.
 This can be done at 
the pre-computation stage and we keep it outside of the algorithm
(in any case, it  can be found in $p^{1/4+o(1)}$ 
arithmetic operations 
on $O(\log p)$-bit integers, see~\cite{Shp1}, which is lower than the
remaining parts of the algorithm).

\begin{algorithm}[Generating $q_p(u)$, $0 \le u \le p-1$] {}\qquad  \newline
  \label{alg:qu all}

\begin{description}

\item {\bf Input:} A prime $p$ and a primitive root $g$ modulo $p$ with $1< g < p$.
\item {\bf Output:} A permuted sequence of the values $q_p(u)$, $0 \le u \le p-1$.
\end{description}

\begin{enumerate}

\item Set $q_p(0) = 0$ and $q_p(1) = 0$.

\item  Compute $q_p(g)$ using the repeated squaring modulo $p^2$.

\item Set $b_1 = g$ and $c_1 = g^{-1} \rem  p$.

\item For $i = 2, \ldots, p-2$ compute

\begin{enumerate}
\item $b_i = g b_{i-1}\rem p$ and $c_i = c_{i-1}g^{-1}\rem p$;
\item $k_i =  (g b_{i-1}-b_i)/p$;
\item  $q_p(b_i)= q_p(g) + q_p(b_{i-1})  +  k_i c_i \rem p$.
\end{enumerate}
\end{enumerate}
\end{algorithm}

\begin{theorem}
\label{thm:qu all}
Algorithm~\ref{alg:qu all}  computes every value 
$q_p(u)$, $0 \le u < p-1$,  in $O\(p\)$
 arithmetic operations
on $O(\log p)$-bit integers.
\end{theorem}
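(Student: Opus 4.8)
The plan is to verify that Algorithm~\ref{alg:qu all} correctly computes all values $q_p(u)$ and to count its arithmetic operations. The key observation is that the inner loop walks through the nonzero residues modulo $p$ in the order $b_i = g^i \rem p$, and the recurrence in step~4(c) is simply the additive relation~\eqref{eq:add struct1} applied to the factorisation $b_i \equiv g \cdot b_{i-1} \pmod p$, corrected for the fact that we are working with the least nonnegative residues rather than with the actual product $g b_{i-1}$ over $\Z$.

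First I would establish correctness of the recurrence. Write $b_{i-1} = g^{i-1} \rem p$ and let $k_i = (g b_{i-1} - b_i)/p$ be the integer quotient computed in step~4(b), so that $g b_{i-1} = b_i + k_i p$ holds over $\Z$. Applying~\eqref{eq:add struct1} to the genuine integer product $g b_{i-1}$ gives $q_p(g b_{i-1}) \equiv q_p(g) + q_p(b_{i-1}) \pmod p$. On the other hand, since $g b_{i-1} = b_i + k_i p$, the second basic property~\eqref{eq:add struct2} (with $u = b_i$ and the shift $k_i p$) yields $q_p(g b_{i-1}) = q_p(b_i + k_i p) \equiv q_p(b_i) - k_i b_i^{-1} \pmod p$. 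Combining these two congruences gives
$$
q_p(b_i) \equiv q_p(g) + q_p(b_{i-1}) + k_i b_i^{-1} \pmod p.
$$
Here I must check that $c_i = b_i^{-1} \rem p$, which is exactly what step~3 and the second half of step~4(a) maintain: $c_1 = g^{-1}$ and $c_i = c_{i-1} g^{-1} = g^{-i} = b_i^{-1}$ modulo $p$. This matches step~4(c), so by induction on $i$ the algorithm outputs the correct Fermat quotient at each $b_i$, and as $i$ ranges over $1, \ldots, p-2$ the values $b_i$ run through all of $\{1, \ldots, p-1\}$ (with the cases $u=0,1$ handled in step~1), so every value $q_p(u)$ is produced.

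For the complexity count I would observe that the one-time cost of step~2, computing $q_p(g)$ by repeated squaring modulo $p^2$, is $O(\log p)$ arithmetic operations, and step~3 costs $O(\log p)$ for the modular inverse. The loop runs $p-3$ times, and each iteration performs only a constant number of multiplications, subtractions, and reductions modulo $p$ on $O(\log p)$-bit integers; in particular no further exponentiation is needed, which is the whole point of the recurrence. Hence the loop contributes $O(p)$ arithmetic operations, dominating the $O(\log p)$ preprocessing, and the total is $O(p)$ as claimed.

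The main point to be careful about, rather than a genuine obstacle, is the bookkeeping between congruences modulo $p$ and honest integer equalities: the relation~\eqref{eq:add struct1} is a statement about the true integer product $uv$, so one must track the quotient $k_i$ that records the difference between $g b_{i-1}$ and its reduction $b_i$, and then invoke~\eqref{eq:add struct2} to account for that shift. Verifying that the auxiliary sequence $c_i$ faithfully tracks $b_i^{-1} \rem p$ throughout the loop is the only inductive ingredient that requires attention; everything else is a direct substitution into the two displayed identities, and the operation count is immediate once one notes that the costly modular exponentiation occurs exactly once.
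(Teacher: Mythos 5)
Your proposal is correct and follows essentially the same route as the paper: both apply~\eqref{eq:add struct1} to the product $g b_{i-1}$ and~\eqref{eq:add struct2} to the decomposition $g b_{i-1} = b_i + k_i p$ to obtain $q_p(b_i) \equiv q_p(g) + q_p(b_{i-1}) + k_i c_i \pmod p$, the only cosmetic difference being that you invoke~\eqref{eq:add struct2} at $u=b_i$ with shift $+k_ip$ while the paper uses $u = g b_{i-1}$ with shift $-k_ip$ and then notes $(g b_{i-1})^{-1}\equiv b_i^{-1}\equiv c_i$. You merely make explicit the induction that $c_i = b_i^{-1}\rem p$ and the operation count, both of which the paper treats as immediate.
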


\begin{proof} The complexity estimate is immediate.
The correctness   of the algorithm follows from
the congruences
\begin{eqnarray*}
q_p(b_i)  \equiv  q_p(g b_{i-1} - k_ip)  
& \equiv &  q_p(g b_{i-1})  + k_i (g b_{i-1})^{-1} \\
& \equiv & q_p(g) + q_p(b_{i-1})  +  k_i c_i \pmod p, 
\end{eqnarray*}
which in turn follow from~\eqref{eq:add struct1} 
and~\eqref{eq:add struct2}. 
\end{proof} 

Note that the algorithm of~\cite{ErnMet1,ErnMet2} is very similar,
except that it uses $g=2$ instead of a primitive root. This makes 
each step faster, but if $2$ is not a primitive root 
modulo $p$ requires going trough all conjugacy classes of the 
group generated by $2$ modulo $p$ and thus requires more ``administration''
of data and also more memory.

Unfortunately Algorithm~\ref{alg:qu all} does not help to compute 
$q_p(u)$ for a given value of $u$ unless   all values 
$q_p(v)$, $0 \le v \le p-1$,  are precomputed and stored in a
table, after which $q_p(u)$ can simple be read from 
there. We now describe a trade-off algorithm which 
requires less memory but the   computation of $q_p(u)$
is more expensive than the simple table look-up.
It depends on a  parameter $z\ge 2$,
which can be adjusted to particular algorithmic 
needs.

For a real $V< p$ we use $\cQ_p(V)$ to denote the table of  
the values of $q_p(v)$  with $v \in [0, V]$. 
We see from Theorem~\ref{thm:qu all} that
$\cQ_p(V)$ can be computed in $O\(\min\{p, V \log p\}\)$
arithmetic operations on $O(\log p)$-bit integers.

Furthermore, for an integer $m$, we 
use $\cI_m(V)$ to denote the table of  
the values $v^{-1} \rem m$  with $v \in [1, V]$ and
$\gcd(v,m)=1$.
Since by the Euler theorem $v^{-1} \equiv v^{\varphi(m)-1} \pmod m$, 
where $\varphi(m)$ is the Euler function, we see  that
$\cI_m(V)$ can be computed in $O\(V \log m\)$
arithmetic operations on $O(\log m)$-bit integers
(there are even more efficient modular 
inversion algorithms with a better bound on the 
number of bit operations, see~\cite{vzGG}; however using them 
does not change the overall complexity of our algorithm).

\begin{algorithm}[Computing $q_p(u)$ for a given {$u\in [0, p-1]$}] {}\qquad  \newline
  \label{alg:qu one}

\begin{description}

\item {\bf Input:} A prime $p$, a real $z\ge 2$, the tables $\cQ_p(p/z)$, 
$\cI_p(p/z)$, $\cI_{p^2}(z)$ and 
an integer $u\in \{0, \ldots, p-1\}$.
\item {\bf Output:} The value of $q_p(u)$.
\end{description}

\begin{enumerate}

\item If $u =0$ set $q_p(u) = 0$.

\item Find integers $v$ and $w$ with $u \equiv v/w \pmod p$ and such 
that $1 \le v \le 2p/z$ and $|w| \le z$.

\item   Recall  $r = w^{-1} \rem p^2$ if $w > 0$ 
or  $r = -((-w)^{-1} \rem p^2)$ if $w < 0$ from 
the table $\cI_{p^2}(z)$.

\item  Compute $s$  with $s \equiv v/w \pmod {p^2}$ and 
such that $0\le s < p^2$.

\item Compute $k = (s-u)/p$.

\item  Recall $r = v^{-1} \rem p$ from   the table $\cI_p(p/z)$.

\item  Recall  $q_p(v)$ and $q_p(w)$ from  the   table $\cQ_p(p/z)$.

\item  Compute $q_p(u) = (q_p(v) - q_p(w) +  kr w) \rem p$.
\end{enumerate}
\end{algorithm}

\begin{theorem}
\label{thm:qu one}
For any integer $u$ with $0 \le u < p-1$,
Algorithm~\ref{alg:qu one}  
computes 
$q_p(u)$  in $O\(\log z\)$
arithmetic operations on $O(\log p)$-bit integers.
\end{theorem}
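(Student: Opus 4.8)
The plan is to establish two independent claims: that the output of Algorithm~\ref{alg:qu one} equals $q_p(u)$ (correctness), and that it is produced in $O(\log z)$ arithmetic operations (complexity). I would dispose of the complexity first, as it is the more routine part. Each of Steps~3,~6 and~7 is a single look-up in a precomputed table and hence costs $O(1)$, while Steps~4,~5 and~8 each require only a bounded number of multiplications, subtractions and reductions on $O(\log p)$-bit integers. The entire cost therefore concentrates in Step~2.

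For Step~2 I would realise the rational reconstruction $u \equiv v/w \pmod p$ through the extended Euclidean algorithm applied to $(p,u)$, equivalently through the continued fraction expansion of $u/p$. Writing $p_k/q_k$ for its convergents, the classical inequality $|u q_k - p\, p_k| < p/q_{k+1}$ shows that if one halts at the first index $k$ with $q_{k+1} > z$, then $w := q_k$ satisfies $|w| \le z$ and $v := |u q_k - p\, p_k|$ satisfies $v < p/z$, with $v \equiv u w \pmod p$; these bounds guarantee that all the quantities needed in Steps~3--7 can be read off from the precomputed tables $\cQ_p(p/z)$, $\cI_p(p/z)$ and $\cI_{p^2}(z)$ in constant time. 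Since the denominators grow at least geometrically ($q_k$ dominates the Fibonacci numbers, so $q_k \gg \phi^{k}$ with $\phi = (1+\sqrt5)/2$), the stopping index satisfies $k = O(\log z)$, and each Euclidean step is one arithmetic operation on $O(\log p)$-bit integers; hence Step~2, and with it the whole algorithm, runs in $O(\log z)$ operations.

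For correctness I would first record the standard fact that $q_p$ depends only on its argument modulo $p^2$: if $a \equiv b \pmod{p^2}$ with $\gcd(ab,p)=1$ then $a^{p-1} \equiv b^{p-1} \pmod{p^2}$, whence $q_p(a) \equiv q_p(b) \pmod p$. Because $s \equiv v/w \pmod{p^2}$ reduces to $u \pmod p$, we have $s = u + kp$ with $k = (s-u)/p$ as in Step~5. From $sw \equiv v \pmod{p^2}$ and the multiplicativity~\eqref{eq:add struct1} I obtain $q_p(s) + q_p(w) \equiv q_p(sw) \equiv q_p(v) \pmod p$, so $q_p(s) \equiv q_p(v) - q_p(w)$; while~\eqref{eq:add struct2} applied to $s = u + kp$ gives $q_p(s) \equiv q_p(u) - k u^{-1} \pmod p$. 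Combining the two and substituting $u^{-1} \equiv w v^{-1} \equiv wr \pmod p$ (with $r = v^{-1}\rem p$ from Step~6) yields $q_p(u) \equiv q_p(v) - q_p(w) + krw \pmod p$, which is precisely the formula of Step~8.

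I expect the genuine obstacle to be Step~2: one must simultaneously guarantee the $O(\log z)$ iteration count and that the produced pair $(v,w)$ lands in the table ranges, both of which rest on the fine size estimates for continued fraction convergents rather than on formal manipulation. A minor point still to be checked is the treatment of a negative $w$, which is harmless because $p-1$ is even and so $q_p(w) = q_p(-w)$, allowing the value $q_p(|w|)$ to be read from the table.
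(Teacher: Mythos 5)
Your proposal is correct and takes essentially the same route as the paper: correctness from the congruences~\eqref{eq:add struct1} and~\eqref{eq:add struct2} applied to $s\equiv v/w\pmod{p^2}$, and the $O(\log z)$ cost from the continued fraction convergents of $u/p$ truncated at the last denominator not exceeding $z$, whose at-least-exponential growth bounds the number of Euclidean steps. Your explicit remarks on the mod-$p^2$ invariance of $q_p$ and on the sign of $w$ are details the paper leaves implicit, but the argument is the same.
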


\begin{proof} 
The correctness   of the algorithm follows from
the congruences
\begin{eqnarray*}
q_p(u) & \equiv & q_p(s - kp)  
 \equiv   q_p(s)  + k s^{-1} \\
& \equiv & q_p(v) - q_p(w) +  k v^{-1} w 
\equiv q_p(v) - q_p(w) +  k r w \pmod p
\end{eqnarray*}
which in turn follow from~\eqref{eq:add struct1} 
and~\eqref{eq:add struct2}. 

It remains to estimate  the complexity of finding the $v$ and $w$ 
with $u \equiv v/w \pmod p$. 
We can also assume that $z < p$ since otherwise the
result is trivial.
We start computing 
continued fraction convergents $a_i/b_i$, $\gcd(a_i, b_i)=1$, 
$i =1, 2, \ldots$, to $u/p$, see, for example,~\cite{Ste} for basic 
properties of continued fractions. 
We define $j$ by the condition
$$
b_j \le z < b_{j+1}.
$$
By the well-known property of continued fractions, we have
$$\left| \frac{a_j}{b_j} - \frac{u}{p} \right| \le \frac{1}{b_jb_{j+1}}
\le  \frac{1}{b_jz}.
$$
We now define 
$$
w=|a_jp-b_ju|
$$
and note that (since $z < 0$)
$$0<w = b_jp
\left| \frac{a_j}{b_j} - \frac{u}{p} \right|  
\leq \frac{p}{z}.
$$
Furthermore $u v \equiv w \pmod p$ for either
$v = a_j$ or $v = -a_j$.  Finally, 
since the denominators of the convergents grow
at least exponentially, we see that $j = O(\log b_j) =
O(\log z)$ and thus find $a_j$ and $b_j$ in
$O(\log z)$ steps, each of them requires to compute 
with $O(\log p)$-bit integers. 
\end{proof} 

We see from Theorem~\ref{thm:qu one}
taken with $z = \exp\(\sqrt{\log p}\)$, that
evaluating (in time $p \exp\(-(1 + o(1))\sqrt{\log p}\)$)
and storing $p \exp\(-(1 + o(1))\sqrt{\log p}\)$ values of Fermat 
quotients, we can compute any other value in time 
$(\log p)^{1/2 + o(1)}$.

\subsection{Fixed Points}

Let $F(p)$ denote the number of fixed points of the map $q_p(u)$
that is, 
$$
F(p) = \# \{u \in \{0, \ldots, p-1\}\ : \
q_p(u) = u\}.
$$

We derive a nontrivial estimate on $F(p)$  from
 Lemmas~\ref{lem:ETK} and~\ref{lem:HB}

\begin{theorem}\label{thm:FP}
We have
$$
F(p)  \ll p^{11/12+o(1)}
$$
as $p\to \infty$.
\end{theorem}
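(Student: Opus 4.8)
The plan is to count fixed points by detecting the condition $q_p(u) = u$ through additive characters and then invoke the exponential sum bound of Lemma~\ref{lem:HB}. The standard device is to write
$$
F(p) = \sum_{u=0}^{p-1} \frac{1}{p} \sum_{a=0}^{p-1} \ep\(a\(q_p(u) - u\)\),
$$
since the inner sum equals $p$ precisely when $q_p(u) = u$ (as integers in $\{0,\dots,p-1\}$, so the identity of residues is the identity of the values) and vanishes otherwise. Separating the term $a = 0$, which contributes the main term $1$, I would be left with bounding
$$
\frac{1}{p} \sum_{a=1}^{p-1} \left| \sum_{u=0}^{p-1} \ep\(a q_p(u)\) \ep(-au) \right|.
$$

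The obstacle is that this is \emph{not} directly the sum in Lemma~\ref{lem:HB}, because of the extra factor $\ep(-au)$ twisting the Fermat-quotient character sum. So the cleaner route is to avoid counting exact fixed points via the full character expansion and instead use a discrepancy argument. The key observation is that $q_p(u) = u$ forces the point $q_p(u)/p$ to lie within distance $1/p$ of the diagonal value $u/p$; more usefully, I would estimate the number of $u$ for which $q_p(u)$ falls in a short interval near $u$ by partitioning $\{0,\dots,p-1\}$ into blocks and, on each block of length roughly $N$, applying the Erd\H{o}s--Tur\'an--Koksma inequality (Lemma~\ref{lem:ETK}) in dimension $s=1$ together with the Heath--Brown bound (Lemma~\ref{lem:HB} with $\nu = 2$) to control the discrepancy of the values $q_p(u)/p$ as $u$ ranges over the block.

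Concretely, Lemma~\ref{lem:HB} with $\nu=2$ gives $\sum_{u=M+1}^{M+N} \ep(a q_p(u)) \ll N^{1/2} p^{3/8 + o(1)}$, so feeding this into Lemma~\ref{lem:ETK} with the optimal choice of $H$ yields a discrepancy bound of the shape $\Delta \ll N^{-1/2} p^{3/8 + o(1)}$ on each block. The number of $u$ in a block of length $N$ with $q_p(u)$ in an interval of length $1/p$ is then at most $N(1/p + \Delta)$, and summing over the $p/N$ blocks gives $F(p) \ll 1 + p \cdot N^{-1/2} p^{3/8+o(1)} = p^{11/8+o(1)} N^{-1/2}$ plus the trivial $p/p = 1$ from the interval widths. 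The balancing: the fixed-point condition pins $u$ to one of $\lceil p/N \rceil$ target intervals of relative width $1/p$ per block, so the genuine count per block is $\ll N\Delta + 1$, and summing over blocks gives $F(p) \ll (p/N)(N \Delta + 1) = p\Delta + p/N \ll p^{11/8+o(1)}N^{-1/2} + p/N$; choosing $N = p^{3/4}$ balances the two terms against the target and produces $F(p) \ll p^{11/12 + o(1)}$. The one delicate point to get right is the block-and-interval bookkeeping---ensuring the diagonal condition $q_p(u)=u$ is correctly localized within each block so that the discrepancy estimate applies to a fixed target interval rather than a moving one---and verifying that the optimization of $N$ against the exponent $3/8$ from $\nu=2$ indeed lands on $11/12$ rather than a worse exponent.
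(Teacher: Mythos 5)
Your overall strategy is the paper's: partition $\{0,\dots,p-1\}$ into blocks of length $N$, bound the discrepancy of the values $q_p(u)/p$ on each block via Lemma~\ref{lem:ETK} with $s=1$ and Lemma~\ref{lem:HB} with $\nu=2$ (your $\Delta \ll N^{-1/2}p^{3/8+o(1)}$ is correct), and you rightly discard the twisted sum $\sum_u \ep(aq_p(u)-au)$, which Lemma~\ref{lem:HB} does not cover. The gap is in the final bookkeeping. The ``delicate point'' you flag --- that the target $\{u\}$ moves with $u$ --- is not a technicality you can defer: a one-dimensional discrepancy bound only counts the $u$ in a block whose $q_p(u)$ lands in a \emph{fixed} interval, so the only admissible localization of the diagonal condition $q_p(u)=u$ on a block $[M+1,M+N]$ is to the fixed interval $[M+1,M+N]$ itself, of length $N$, not of length $1$. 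The per-block count is therefore $N(N/p+O(\Delta)) = N^2/p+O(N^{1/2}p^{3/8+o(1)})$, not $N\Delta+1$; detecting the diagonal at resolution $1/p$ would require two-dimensional equidistribution of the pairs $(u/p,\,q_p(u)/p)$, i.e.\ exactly the twisted exponential sums you already observed are unavailable.

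Consequently your optimization is also off. Summing the correct per-block bound over the $O(p/N)$ blocks gives $F(p) \ll N + p^{11/8+o(1)}N^{-1/2}$; the main term $N$, absent from your expression $p\Delta+p/N$, is what forces the balance $N^{3/2}=p^{11/8}$, i.e.\ $N=p^{11/12}$, which yields $F(p)\ll p^{11/12+o(1)}$. With your choice $N=p^{3/4}$ even your own expression degenerates, since $p\Delta \ll p^{11/8-3/8+o(1)}=p^{1+o(1)}$ is the trivial bound. So the key lemmas and the discrepancy estimate are right, but the per-block count and the choice of $N$ both need to be corrected as above to land on the exponent $11/12$.
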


\begin{proof} Let us choose some positive integer  
parameter $N \in [1, p-1]$ and for an integer $M$ 
we denote by   $T(p;M,N)$ the number of integers
$u \in [M+1,   M+N]$ with $q_p(u) \in [M+1,   M+N]$.
Considering the discrepancy of the fractions
$q_p(u)/p$, $u = M+1, \ldots, M+N$ and
combining  Lemma~\ref{lem:ETK} (taken with $s=1$)
with Lemma~\ref{lem:HB} (taken with $\nu=2$) , we
immediately conclude 
$$
T(p;M,N) = \frac{N^2}{p} + O\(N^{1/2} p^{3/8+o(1)}\).
$$
Clearly every $u = M+1, \ldots, M+N$ which is a fixed 
point contributes to $T(p;M,N)$. Covering the interval 
$[0,p-1]$ with at most $(p/N + 1)$ intervals of length 
$h$ we obtain 
$$
F(p) \le \(\frac{p}{N} + 1\) \(\frac{N^2}{p} + O\(N^{1/2} p^{3/8+o(1)}\)\).
$$
Choosing $N = \rf{p^{11/12}}$, we conclude 
the proof. 
\end{proof} 

There is little doubt that the bound of Theorem~\ref{thm:FP}
is very imprecise. It is easy to see that in the full range
$0 \le u \le p^2-1$ the relation~\eqref{eq:add struct2}
implies
$$
 \# \{u \in \{0, \ldots, p^2-1\}\ : \
q_p(u) \equiv u \pmod p\} = 2p-1. 
$$
Indeed, it is enough to write $u = v+kp$ with $v,k\in \{0, \ldots, p-1\}$
and notice that 
\begin{itemize}
\item either $v = 0$ and then $k$ can take any values 
\item or $v > 0$ and then the relation~\eqref{eq:add struct2}
identify $k$ uniquely.
\end{itemize}
 Thus one can expect that $F(p) = O(1)$.

 In fact it seems reasonable to expect 
that the map $u\mapsto q_p(u)$ behaves similar to a random 
map. We recall that for a random map on $m$ elements,
the probability of having $k$ fixed points is
$$
\frac{1}{m^m} \binom{m}{k} \times (m-k -1)^{m-k} \to \frac{1}{ek!}
$$
as $m \to \infty$. 

Below we present numerical results giving the numbers $N(k)$
of primes $p\in [50000, 200000]$ for which the map 
$u\mapsto q_p(u)$ has exactly $F(p) = k$ fixed points
(note that we discard the ``artificial'' fixed point $u=0$).
We also give the proportions of such primes $\rho(k) = N(k)/N$
where $N = 12851$ is the total number of primes 
$p\in [50000, 200000]$ and compare them with $\rho_0(k) =(e k!)^{-1}$
for $k =0,\ldots, 6$.
We note that in the above range $N(k)= 0$ for $k \ge 7$.

\begin{center}
\begin{tabular}{|l|l|l|l|l|l|l|l|}
\hline
$k$       & 0  & 1 & 2 & 3  & 4  & 5  &  6  \\
\hline
$\rho_0(k)$ &  0.368 &  0.368 &  0.184  &  0.0613 &  
 0.0153  &  0.00306  &  0.000511\\
\hline
$N(k)$ & 4770  & 4697  & 2327  & 844 &    174  & 36  & 3  \\
$\rho(k)$ & 0.371  & 0.365  & 0.181  & 0.0656 &   
 0.0135   & 0.00280  &  0.000233  \\
\hline
\end{tabular}
\vskip 10pt
{\it Statistics of fixed points}
\end{center}

These numerical results appear to indicate a reasonable agreement between 
the prediction and actual results.

\subsection{Concentration of values} 

For  integers $k$ and $h\ge 1$ we denote 
by $U(p;k,h)$ the number of $u \in \{0, \ldots, p-1\}$
for which $q_p(u) \equiv z \pmod p$ for some $z \in [k+1, k+h]$. 

As in the proof of Theorem~\ref{thm:FP}, a
combination of  Lemma~\ref{lem:HB} (which we take 
with $N =p$ and $\nu =2$) with Lemma~\ref{lem:ETK} 
gives the following asymptotic formula
\begin{equation}
\label{eq:image asymp}
U(p;k,h) = h + O(p^{7/8 + o(1)}) 
\end{equation}
as $p \to \infty$. 
On the other hand, using~\eqref{eq:Rep}, we
trivially  obtain
$$
U(p;k,h) \le hp^{1/2 + o(1)}
$$
that  improves~\eqref{eq:image asymp} for $h \le p^{3/8}$. 

We now obtain a better upper bound, which 
improves~\eqref{eq:image asymp} for $h \le p^{3/4}$.

\begin{theorem}\label{thm:Distr}
For any integers $k$ and $h\ge 1$, we have 
$$
U(p;k,h) \le h^{1/2}p^{1/2 + o(1)}
$$
as $p\to \infty$.
\end{theorem}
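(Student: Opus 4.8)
The goal is to bound $U(p;k,h)$, the number of $u \in \{0,\ldots,p-1\}$ whose Fermat quotient lands in a residue window $[k+1,k+h]$ of length $h$. The asymptotic formula~\eqref{eq:image asymp} is strong when $h$ is large (comparable to $p$) but useless for short windows, and the trivial bound from~\eqref{eq:Rep} only helps for very short windows. The target $h^{1/2}p^{1/2+o(1)}$ sits between these, so the natural strategy is to combine the two pieces of information: the uniform distribution of $q_p$ on long intervals (via Lemma~\ref{lem:HB}) together with the additive structure~\eqref{eq:add struct2}, which is what actually controls how the preimage set is organized.

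The plan is to exploit the completion/dilation trick built into~\eqref{eq:add struct2}. If $q_p(u)\equiv z \pmod p$ with $z$ in a short window, then for each fixed $k$ the shifted value $q_p(u+kp)\equiv q_p(u)-ku^{-1}$ spreads $z$ across all residues as $k$ ranges. More usefully, I would introduce a free parameter: pick a real $N$ with $1\le N\le p$ and count, in each of the $O(p/N+1)$ intervals of length $N$ covering $[0,p-1]$, how many $u$ have $q_p(u)$ in the target window. Writing the indicator of the window $[k+1,k+h]$ via its Fourier expansion modulo $p$ and applying Lemma~\ref{lem:HB} (with $\nu=2$) to the resulting character sums, one expects a local count of the shape $Nh/p + (\text{error})$, where the error comes from the nontrivial frequencies and is controlled by the exponential sum bound together with the weight $\sum_{0<|a|\le p/2}(h/p)\min\{\cdot\}$ arising from Lemma~\ref{eq:Incompl} applied to the window. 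Summing over the $O(p/N)$ intervals and optimizing $N$ against $h$ is what should produce the stated exponent. The main quantitative balancing act is to get the $h^{1/2}$ rather than a full $h$: this forces me to treat the Fourier coefficients of the interval $[k+1,k+h]$ carefully rather than bounding them crudely, since the gain of $h^{1/2}$ reflects an $L^2$-type (Cauchy–Schwarz) saving over the window length.

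The step I expect to be the real obstacle is precisely this $L^2$ saving. A direct Erdős–Turán–Koksma estimate as in the proofs of Theorems~\ref{thm:FP} and~\ref{thm:Distr}'s predecessor yields an error proportional to $h$ (through $|B|=h/p$ times the total count), which only recovers~\eqref{eq:image asymp} and cannot beat it below $h=p^{3/4}$. To extract the $h^{1/2}$, I anticipate squaring the count and expanding, so that one is led to estimate $\sum_u \#\{(z_1,z_2)\in[k+1,k+h]^2 : q_p(u)\equiv z_i\}$ or, dually, to bound a double character sum in which the two copies of the window interact; Cauchy–Schwarz then converts a factor of $h$ into $h^{1/2}$ at the cost of a diagonal term that must be shown to be dominated. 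Making this second-moment argument compatible with the additive relation~\eqref{eq:add struct2} — which is the only structural input available, since $q_p$ has no multiplicative periodicity one can iterate — is where the delicate bookkeeping lies, and I would expect the final optimization over $N$ (or over the auxiliary parameter governing the completion) to be dictated by balancing this diagonal contribution against the off-diagonal exponential-sum error.
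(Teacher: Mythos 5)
There is a genuine gap: you have missed the key structural input, and the route you sketch in its place cannot reach the stated exponent. You write that the additive relation~\eqref{eq:add struct2} ``is the only structural input available, since $q_p$ has no multiplicative periodicity one can iterate,'' but the paper's proof rests precisely on the other identity,~\eqref{eq:add struct1}: $q_p(uv)\equiv q_p(u)+q_p(v)\pmod p$. Let $\cU$ be the set counted by $U(p;k,h)$. For any pair $(u,v)\in\cU^2$ the integer product $w=uv$ satisfies $0\le w\le p^2-1$ and $q_p(w)\equiv z\pmod p$ for some $z$ in a window of length $O(h)$, namely $[2k+2,2k+2h]$. By~\eqref{eq:add struct2}, each fixed residue class $z$ has only $O(p)$ preimages $w$ in $\{0,\ldots,p^2-1\}$ (write $w=v_0+kp$: for each $v_0\ne 0$ the value of $k$ is determined), so at most $O(hp)$ integers $w$ can arise. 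The divisor bound $\tau(w)=w^{o(1)}$ shows each such $w$ comes from at most $p^{o(1)}$ pairs $(u,v)$, whence $(\#\cU)^2\le hp^{1+o(1)}$. The argument is entirely elementary and uses no exponential sums.

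Your proposed alternative --- completing via~\eqref{eq:add struct2}, expanding the window indicator in Fourier modes, and extracting the $h^{1/2}$ by a second-moment/Cauchy--Schwarz step --- cannot be pushed to the claimed strength with the tools in the paper. The Cauchy--Schwarz step requires a bound of the shape $\sum_{a=0}^{p-1} R(a)^2\le p^{1+o(1)}$, where $R(a)=\#\{u: q_p(u)=a\}$, and by orthogonality this second moment equals $p^{-1}\sum_{c=0}^{p-1}\bigl|\sum_{u}\ep(cq_p(u))\bigr|^2$. Lemma~\ref{lem:HB} with $N=p$ gives at best $p^{1-1/(2\nu)+1/(2\nu^2)+o(1)}$ for each nonzero frequency, so the off-diagonal contribution is at least $p^{2-1/\nu+1/\nu^2+o(1)}$, which exceeds $p^{1+o(1)}$ for every $\nu$; with $\nu=2$ you would only get $U(p;k,h)\le h^{1/2}p^{7/8+o(1)}$. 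You correctly identified the $L^2$ saving as the crux, but the resolution is not analytic: it is the multiplicative identity~\eqref{eq:add struct1} combined with the divisor bound, exactly the structure you ruled out.
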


\begin{proof} Let $\cU$ be the set of  $u \in \{0, \ldots, p-1\}$, 
which are counted by $U(p;k,h)$. 
Using~\eqref{eq:add struct1} we see that any $w$ of the form $w = uv$
with $uv \in \cU$ satisfies $0 \le w \le p^2 -1$ and 
\begin{equation}
\label{eq:cong w}
q_p(w) \equiv z \pmod p
\end{equation}
for some $z \in [2k+2, 2k+2h]$. 
For a fixed integer $z$, there are $O(p)$ values of 
$w \in \{0, \ldots, p^2-1\}$ satisfying~\eqref{eq:cong w},
which follows immediately from~\eqref{eq:add struct2}
(see also the proof of~\cite[Proposition~2.1]{Fouch}). 
So there are at most $O(hp)$ values of $w$
satisfying~\eqref{eq:cong w} with  some $z \in [2k+2, 2k+2h]$. 
Using the classical estimate 
$$
\tau(w) = w^{o(1)}, \qquad w \to \infty,
$$
on the divisor function $\tau(w)$
(see~\cite[Bound~(1.81)]{IwKow} with $k=2$), 
we deduce that each $w = uv$ can be obtained from no more than $p^{o(1)}$
distinct pairs $(u,v) \in \cU^2$. Therefore 
$\(\# \cU\)^2 \le  hp^{1+o(1)}$, which concludes the proof. 
\end{proof} 

\subsection{Image size} 

Let $M(p)$ be  the image size of the  $q_p(u)$ for $0 \le u \le p-1$,
that is 
$$
M(p) = \# \{q_p(u)\ : \ 0 \le u \le p-1\}.
$$
The bound~\eqref{eq:Rep} immediately implies $M(p) \ge p^{1/2+o(1)}$.
In fact  more precise bounds
$$
\sqrt{p} -1 \le M(p) \le p- \sqrt{(p-1)/2}
$$
can be obtained from~\eqref{eq:add struct1}
and~\eqref{eq:add struct2}, see~\cite[Section~3]{ErnMet2}.

We now obtain a stronger lower bound  on $M(p)$.

\begin{theorem}
\label{thm:Image Size}
We have 
$$
M(p) \ge (1+o(1)) \frac{p}{(\log p)^{2}}, 
$$
as $p\to \infty$.
\end{theorem}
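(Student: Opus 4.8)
The plan is to bound $M(p)$ from below by counting, in a clever way, how many distinct values $q_p(u)$ takes. Since $M(p)$ is the image size, a natural strategy is to produce a large set of integers $u$ on which $q_p$ is guaranteed to be injective, or nearly so, and the additive structure~\eqref{eq:add struct1} is the obvious tool. First I would exploit~\eqref{eq:add struct1}: for a prime $r \ne p$, we have $q_p(r^j) \equiv j\, q_p(r) \pmod p$. Thus if $q_p(r) \not\equiv 0 \pmod p$, the values $q_p(r^j)$ for $j = 0, 1, \ldots$ run through an arithmetic progression modulo $p$ with common difference $q_p(r)$, and these are all distinct modulo $p$ as long as $j$ stays below $p/\gcd(q_p(r),p) = p$. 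More generally, multiplicativity turns the problem of distinguishing images into a problem about the additive span of the vector $\bigl(q_p(r)\bigr)_r$ indexed by small primes $r$.

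The concrete approach I would pursue is to consider all integers $u$ in a dyadic range that are built multiplicatively from a fixed set of small primes $r_1, \ldots, r_t$ (i.e. $u = \prod r_i^{e_i}$), so that by~\eqref{eq:add struct1} one has $q_p(u) \equiv \sum_i e_i q_p(r_i) \pmod p$. The number of distinct values of $q_p(u)$ is then at least the number of distinct values of the linear form $\sum_i e_i q_p(r_i) \bmod p$ as the exponent vector ranges over an allowed box. If the coefficients $q_p(r_i)$ are "generic" (not all sharing a common small factor with $p$, which is automatic since $p$ is prime), this linear form hits roughly $\min\{p, \prod(\text{range of } e_i)\}$ residues. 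To make $u < p$ while keeping the number of exponent vectors large, I would take $t \approx \log p/\log\log p$ smallest primes, each raised to exponents bounded so that the product stays below $p$; the count of such smooth $u$ below $p$ is governed by standard smooth-number estimates, and the target $p/(\log p)^2$ suggests choosing parameters so that $\sum_i e_i q_p(r_i)$ covers all residues while using only $p^{1+o(1)}/(\log p)^2$ many genuinely distinct image values.

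The hard part will be showing that the linear form $\sum_i e_i q_p(r_i) \bmod p$ actually attains close to $p$ distinct values rather than collapsing onto a small subgroup or a short interval; controlling potential coincidences among the $q_p(r_i)$ is where the real work lies. One clean way to force a large image is to arrange that a \emph{single} base prime $r$ already has $q_p(r) \not\equiv 0$, giving $p$ distinct values $j\,q_p(r)$ for $0 \le j < p$ from powers $r^j$ — but of course $r^j < p$ forces $j < \log p / \log r$, which is far too few. So one genuinely needs many independent base primes, and the loss factor $(\log p)^2$ should emerge from balancing the number of base primes against the exponent budget imposed by $u < p$. I expect the bound to follow by choosing the base primes and exponent ranges so that the multiplicative-to-additive dictionary, combined with a counting argument showing near-injectivity of the exponent-to-residue map, yields $p^{1+o(1)}/(\log p)^2$ distinct residues. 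The main obstacle, and the reason the logarithmic loss appears, is precisely quantifying how the constraint $\prod r_i^{e_i} < p$ limits the reachable residues and ensuring the resulting image count is not diminished by multiplicative coincidences; Konyagin's acknowledged contribution suggests a sharp combinatorial or character-sum input is needed here to pin down the constant $(1+o(1))$.
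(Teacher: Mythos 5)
Your starting point --- exploiting the additivity relation~\eqref{eq:add struct1} to convert the image-size question into a question about the values of the linear form $\sum_i e_i q_p(r_i) \bmod p$ on smooth numbers $u=\prod r_i^{e_i}<p$ --- is natural, but the step you yourself flag as ``the hard part'' is a genuine gap, and it is not a technicality that can be filled by balancing parameters. There is no known way to show that the coefficients $q_p(r_1),\ldots,q_p(r_t)$ are ``generic'': unconditionally they could satisfy many additive relations (e.g.\ all be small multiples of a single residue), in which case the linear form collapses onto far fewer than $\min\{p,\prod E_i\}$ residues even though no $q_p(r_i)$ shares a factor with $p$. Even the much weaker statement that a single $q_p(r_i)$ is nonzero for some small prime $r_i$ is the subject of the $\ell_p$ estimates quoted in the introduction. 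So the proposed ``exponent-to-residue near-injectivity'' is precisely the theorem to be proved, restated, and your outline offers no mechanism for it.

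The paper's proof avoids lower-bounding the number of \emph{distinct} values directly and instead upper-bounds the number of \emph{collisions}, then applies Cauchy--Schwarz. Concretely, with $Q(p,a)$ the number of primes $\ell\le p-1$ having $q_p(\ell)=a$, one has $\(\sum_a Q(p,a)\)^2\le M(p)\sum_a Q(p,a)^2$, and the second moment counts pairs of primes $(\ell,r)$ with $q_p(\ell)=q_p(r)$. By~\eqref{eq:add struct1} such a pair forces $w\equiv \ell/r \pmod{p^2}$ to lie in the group $\cG_p$ of $p$th power residues modulo $p^2$, and --- this is the key point your outline is missing --- for a fixed $w\ne 1$ two distinct prime pairs would give $\ell_1 r_2=\ell_2 r_1$ as an equality of integers below $p^2$, which unique factorization into primes rules out. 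Hence the collision count is at most $\pi(p-1)+\#\cG_p-1=p+O(p/\log p)$, giving $M(p)\ge (1+o(1))\pi(p-1)^2/p$. The restriction to prime arguments (rather than smooth numbers), the passage to the subgroup $\cG_p$ modulo $p^2$, and the second-moment formulation are the three ingredients your proposal lacks; without some substitute for them the approach as written cannot be completed.
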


\begin{proof} Let $Q(p,a)$   be the number of primes $\ell \in \{1, \ldots, p-1\}$
with $q_p(\ell) = a$ (note that we have discarded  $u =0$). 
Clearly 
\begin{equation}
\label{eq:1st Moment}
\sum_{a=0}^{p-1} Q(p,a) = \pi(p-1)
\end{equation}
where, as usual, $\pi(x)$ denotes the number of primes $\ell \le x$,
and also 
\begin{equation}
\label{eq:2nd Moment}
\sum_{a=0}^{p-1} Q(p,a)^2 = \# \cR(p), 
\end{equation}
where 
$$
\cR(p) = \{(\ell,r)\ : \ 1 \le \ell,r\le p-1, \ \ell,r~\mathrm{primes}\ q_p(\ell) = q_p(r)\}.
$$
We see from~\eqref{eq:add struct1} that if $(\ell,r) \in \cR(p)$ 
and 
\begin{equation}
\label{eq:wuv}
w\equiv \ell/r\pmod {p^2}
\end{equation}
then 
$$
q_p(w) \equiv q_p(\ell) - q_p(r)   \equiv 0 \pmod p.
$$ 

Since all $w$ with $q_p(w)   \equiv 0 \pmod p$ and $\gcd(w,p)=1$
have 
$$
w^{p-1} \equiv 1 \pmod {p^2},
$$
they are elements of the group $\cG_p$ of the
$p$th power residues modulo $p$. 
Thus we see from~\eqref{eq:wuv} that  
$$
\# \cR(p) \le N(p), 
$$
where $N(p)$ is the number of solutions 
$(\ell, r, w)$  to 
\begin{equation}
\label{eq:wlr}
w\ell \equiv r \pmod {p^2}, \qquad\text {where } \ell,r\le p-1, \
 \ell,r\ \text{primes},\  w\in \cG_p.
\end{equation}

We note that for $w \equiv 1 \pmod {p^2}$ there are exactly
$\pi(p-1)$ pairs $(\ell,r)$  with $\ell = r$ that 
satisfy~\eqref{eq:wlr}.
For any other $w\in \cG_p$ if~\eqref{eq:wlr} is
satisfied for $(\ell_1,r_1)$ and $(\ell_2,r_2)$
then 
$$
\ell_1 r_2  \equiv \ell_2 r_1 \pmod {p^2}
$$
which in turn implies the equation 
\begin{equation}
\label{eq:l12r12}
\ell_1 r_2  = \ell_2 r_1 
\end{equation}
(since $1 \le \ell_1, \ell_2 r_1,r_2\le p-1$). 
Because  $\ell_1, \ell_2 r_1,r_2$ are primes, we 
see from~\eqref{eq:l12r12} that either 
$(\ell_1,\ell_2) = (r_1, r_2)$, which is 
impossible for $w \not \equiv 1 \pmod {p^2}$, 
$(\ell_1,r_1) = (\ell_2, r_2)$, which means that when 
$w\in \cG_p \setminus \{1\}$ is fixed, then~\eqref{eq:wlr}
is satisfied for at most one pair of primes $(\ell,r)$.
Therefore 
\begin{equation}
\label{eq:W bound}
\# \cR(p) \le N(p) \le \pi(p-1) + \#\cG_p-1  = p + O(p/\log p).
\end{equation}

Now, since by the Cauchy inequality we  have 
$$
\(\sum_{a=0}^{p-1} Q(p,a)\)^2 \le M(p) \sum_{a=0}^{p-1} Q(p,a)^2, 
$$
recalling~\eqref{eq:1st Moment} and~\eqref{eq:2nd Moment} 
and using~\eqref{eq:W bound}, we obtain
$$
M(p) \ge (1+o(1)) \pi(p-1)^2 p^{-1}.
$$ 
which concludes  the proof. 
\end{proof}

Clearly the  bound  of Theorem~\ref{thm:Image Size}
is not tight. The image size $M_m$ of a random map on 
an $m$ element 
set is expected to be  
$$
M_m = \(1-\frac{1}{e}\) m = 0.63212\ldots m
$$
see~\cite[Theorem~2]{FlOdl},
and thus it is reasonable to expect that $M(p)/p \approx 1-1/e$.

We now give
the average  value of $M(p)/p$ 
taken over primes $p$ in the intervals 
\begin{equation}
\label{eq:Int Ji}
\cJ_i = [50000i, 50000(i+1)], \qquad i =1,2,3.
\end{equation}
and 
the whole interval 
\begin{equation}
\label{eq:Int J}
\cJ = [50000,  200000].
\end{equation}

\begin{center}
\begin{tabular}{|l|l|l|l|l|}
\hline
Range     & $\cJ_1$ &
$\cJ_2$ & $\cJ_3$  & $\cJ$ \\
\hline
\# of primes & 4459 & 4256 & 4136  &  12851\\
\hline
$M(p)/p$ & 0.63212  & 0.63208  & 0.63212  & 0.63211  \\
\hline
\end{tabular}
\vskip 10pt
{\it Statistics of  image sizes}
\end{center}

\subsection{Distribution of orbit lengths} 

For any map $f$ defined on an $m$ element 
set,
and any initial value $u_0$ from this set, we consider 
the iterations $u_i =f(u_{i-1})$,  $i =1,2,\ldots$.
Then for some $\rho > \mu \ge 0$ we have
$u_\rho = u_\mu$. The smallest value of $\rho$ is
called the {\it orbit length\/} and 
the corresponding (and thus uniquely defined)
value of $\mu$ is called the {\it tail length\/}.

By~\cite[Theorem~3]{FlOdl} the expected values $\rho_m$ and $\mu_m$  of 
the orbit and  tail  length,  taken over all random maps and
initial values $u_0$, satisfy
$$
\frac{\rho_m}{\sqrt{m}} = \sqrt{\pi/2} + o(1)
\mand
\frac{\mu_m}{\sqrt{m}} = \sqrt{\pi/8}+ o(1),
$$
as $m \to \infty$.

Here we present the results of computation of 
the average values of 
the  orbit  and  the tail lengths,  scaled  by $\sqrt{p}$, 
for the sequence~\eqref{eq:FermDyn}
taken over primes $p$ in the intervals $\cJ_1, \cJ_2, \cJ_3$ and
$\cJ$, given by~\eqref{eq:Int Ji} and~\eqref{eq:Int J}, respectively, 
and a randomly chosen initial
value $u_0 \in [1, p-1]$.

\begin{center}
\begin{tabular}{|l|l|l|l|l|}
\hline
Range     & $\cJ_1$ &
$\cJ_2$ & $\cJ_3$  & $\cJ$ \\
\hline
\# of primes & 4459 & 4256 & 4136  &  12851\\
\hline
$\rho/\sqrt{p} $& 1.2423  & 1.2445  & 1.2444  & 1.2437    \\
$\mu/\sqrt{p}$ & 0.62179  & 0.62200  & 0.61806  & 0.62066     \\
\hline
\end{tabular}
\vskip 10pt
{\it Statistics of  orbit  and  the tail lengths, random $u_0$}
\end{center}

Since the values $q_p(2)$ are of special interest, 
we also present similar data where the inutial value is 
alway chosen as  $u_0 = 2$. 

\begin{center}
\begin{tabular}{|l|l|l|l|l|}
\hline
Range     & $\cJ_1$ &
$\cJ_2$ & $\cJ_3$  & $\cJ$ \\
\hline
\# of primes & 4459 & 4256 & 4136  &  12851\\
\hline
$\rho/\sqrt{p} $& 1.2381  & 1.2507  & 1.2401  & 1.2429    \\
$\mu/\sqrt{p}$ & 0.61778  & 0.63004  & .62060  & 0.62275    \\
\hline
\end{tabular}
\vskip 10pt
{\it Statistics of  orbit  and  the tail lengths, $u_0=2$}
\end{center}

The results show quite satisfactory matching with 
the expected values of
$$
\sqrt{\pi/2}  = 1.2533\ldots \mand  \sqrt{\pi/8} = 0.62665 \ldots.
$$

Furthermore, we also give similar average values for $C(p)/p$,
where $C(p)$ is the total number of cyclic points in all possible 
trajectories of the map $u\mapsto q_p(u)$ on the set $\{0, \ldots, p-1\}$, 
taken over primes from the same intervals  $\cJ_1, \cJ_2, \cJ_3$ and $\cJ$.

\begin{center}
\begin{tabular}{|l|l|l|l|l|}
\hline
Range     & $\cJ_1$ &
$\cJ_2$ & $\cJ_3$  & $\cJ$ \\
\hline
\# of primes & 4459 & 4256 & 4136  &  12851\\
\hline
$C(p)/\sqrt{p} $& 1.2413 & 1.2527 &1.23706  &  1.2437  \\
\hline
\end{tabular}
\vskip 10pt
{\it Statistics of  cyclic points}
\end{center}

By~\cite[Theorem~2]{FlOdl} the number $C_m$ of cyclic nodes
of a random map on 
an $m$ element 
set is expected to be  
$$
C_m =\sqrt{\pi/2} m = 1.2533\ldots ,
$$
which again is very close to the observed average values.

\section{Pseudorandomness} 
\label{eq:pseudo}
\subsection{Joint distribution} 

For integers $M$,  $N \ge 1$, $s\ge 1$ and an integer vector 
$\vec{a} = (a_0, \ldots, a_{s-1})$ we consider the exponential sums
$$
S_{s,p}(M,N;\vec{a}) =
\sum_{u=M+1}^{M+N} \ep\(\sum_{j=0}^{s-1} a_j q_p(u+j)\).
$$
Thus the above sums are generalisations of those of Lemma~\ref{lem:HB}
that correspond to the case $s = 1$.  However the method of
Heath-Brown~\cite{H-B} does not seem to apply to the sums
$S_{s,p}(M,N;\vec{a})$ as it requires 
good estimates of mulitiplicative character sums with 
polynomials, which are not currently known (see however~\cite{Chang}
for some potential approaches in the case $s = 2$).

We are now ready to prove an estimate on $S_{s,p}(M,N;\vec{a})$
which together with Lemma~\ref{lem:ETK} implies an 
upper bound on the discrepancy  of 
points~\eqref{eq:Points}. 

\begin{theorem}
\label{thm:Exp Sum}
For any integer $s\ge 1$, we have
$$
\max_{\gcd(a_0, \ldots, a_{s-1}, p) =1} 
\left|S_{s,p}(M,N;\vec{a}) \right| \ll 
s p \log p
$$
uniformly over $M$ and $p^2 > N\ge 1$.
\end{theorem}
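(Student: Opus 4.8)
The plan is to exploit the additive structure~\eqref{eq:add struct2} to linearise the phase inside each residue class modulo $p$. First I would split the summation range according to the residue $t = u \bmod p$ and the block index $k = \fl{u/p}$, writing $u = t + kp$ with $0 \le t < p$. For each $j$ the integer $u+j$ reduces to $c_j = (t+j) \bmod p$, with block index $k$ if $t+j < p$ and $k+1$ if $t+j \ge p$ (here I use that $p > s$, valid as $p\to\infty$). Applying~\eqref{eq:add struct2} gives $q_p(u+j) \equiv q_p(c_j) - k_j c_j^{-1} \pmod p$ whenever $c_j \ne 0$, so that for fixed $t$ the phase becomes an affine function of $k$:
$$
\sum_{j=0}^{s-1} a_j q_p(u+j) \equiv \alpha(t) - k\,\beta(t) \pmod p,
\qquad
\beta(t) \equiv \sum_{j=0}^{s-1} a_j\, c_j^{-1} \pmod p,
$$
where $\alpha(t)$ collects the $k$-independent terms. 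The crucial point is that the slope $\beta(t)$ is independent of $k$.

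Second, for each fixed $t$ the index $k$ runs over a contiguous interval of length $K \le N/p + 1 \le p+1$ (using $N < p^2$), so the inner sum is geometric and Lemma~\ref{eq:Incompl} bounds it by $\ll \min\{K, p/\|\beta(t)\|\}$; the factor $\ep(\alpha(t))$ has modulus one and drops out. It thus remains to sum $\min\{p+1,\ p/\|\beta(t)\|\}$ over $t$. Since Lemma~\ref{eq:Incompl} bounds partial geometric sums by the same expression, the boundary blocks (where $k$ ranges over a truncated interval) require no separate treatment.

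Third --- and this is the heart of the argument --- I would control the distribution of $\beta(t)$. Regarding it as the rational function $R(t) = \sum_j a_j/(t+j)$ on $\F_p$, clearing denominators shows that $R(t) = b$ is a polynomial equation of degree $\le s$ in $t$, hence has at most $s$ solutions for each value $b \in \F_p$. Moreover $\beta(t)$ vanishes for at most $s-1$ values of $t$: the numerator $\sum_j a_j\prod_{i\ne j}(t+i)$ is not identically zero modulo $p$, because its value at $t=-i$ equals $a_i\prod_{m\ne i}(m-i)$, which is $\not\equiv 0$ for some $i$ (as $\gcd(a_0,\dots,a_{s-1},p)=1$ and $p>s$). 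The $\le s-1$ residues with $\beta(t)\equiv 0$, together with the $\le s$ poles $t\equiv -j$ where some $c_j=0$ (handled by the trivial bound $K\le p+1$ on the inner sum), contribute $O(sp)$. For the remaining $t$ I would estimate
$$
\sum_{\beta(t)\ne 0} \frac{p}{\|\beta(t)\|}
\;\le\; sp\sum_{b=1}^{p-1}\frac{1}{\|b\|}
\;\le\; 2sp\sum_{b=1}^{(p-1)/2}\frac1b
\;\ll\; s\,p\log p,
$$
using that each nonzero value $b$ is attained at most $s$ times and that $\|b\| = \min(b,p-b)$. Combining the three contributions yields $|S_{s,p}(M,N;\vec a)| \ll s\,p\log p$.

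The main obstacle is the third step: once the phase is linearised, everything hinges on showing that the slope $\beta(t)$ is spread out --- that it vanishes rarely and attains each nonzero value with bounded multiplicity. This is precisely where the coprimality hypothesis $\gcd(a_0,\dots,a_{s-1},p)=1$ enters, guaranteeing the numerator polynomial is nonzero modulo $p$, and the harmonic sum $\sum 1/\|b\|$ is what produces the $\log p$ factor in the final bound.
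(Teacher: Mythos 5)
Your proposal is correct and follows essentially the same route as the paper's proof: decompose $u$ into residue classes modulo $p$, use the additive relation~\eqref{eq:add struct2} to make the phase affine in the block index $k$, bound the inner geometric sum via Lemma~\ref{eq:Incompl}, and control the multiplicity of the slope $\sum_j a_j(t+j)^{-1}$ as a nonconstant rational function of degree $O(s)$, which yields the harmonic sum and the factor $sp\log p$. Your treatment of the degenerate residues (poles and zeros of the slope) and the verification that the numerator is nonzero modulo $p$ are slightly more explicit than the paper's, but the argument is the same.
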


\begin{proof} 
Select any $\vec{a} = (a_0, \ldots, a_{s-1}) \in \Z^s$  with
$\gcd(a_0, \ldots, a_{s-1} ,p) = 1$ and take $K=\fl{N/p}$. We get
\begin{eqnarray*}
S_{s,p}(M,N;\vec{a})&=& \sum_{u=M+1}^{M+Kp} \ep\(\sum_{j=0}^{s-1} a_j q_p(u+j)\)+O(p)\\&=&\sum_{u=1}^{Kp} \ep\(\sum_{j=0}^{s-1} a_j q_p(u+M+j)\)+O(p) \\&=&\sum_{v=1}^{p}\sum_{k=0}^{K-1}\ep\(\sum_{j=0}^{s-1} a_j q_p(v+M+j+kp)\)+O(p).
\end{eqnarray*}

Let $\cV$ be the set of $v = 1, \ldots, p$ with 
$v\not \equiv -M - j \pmod p$ for any $j = 0, \ldots, s-1$. 
Therefore, using~\eqref{eq:add struct2}, we obtain:
\begin{equation}
\label{eq:S and W}
S_{s,p}(M,N;\vec{a})  = W +O(p+sK),
\end{equation}
where
\begin{eqnarray*}
W & = & \sum_{v\in \cV}\sum_{k=0}^{K-1}\ep\(\sum_{j=0}^{s-1} (a_j q_p(v+M+j)-a_jk(v+M+j)^{-1})\)
 \\
& = &  \sum_{v\in \cV}\ep\(\sum_{j=0}^{s-1}a_j q_p(v+M+j)\)
\sum_{k=0}^{K-1}\ep\(-k\sum_{j=0}^{s-1}a_j(v+M+j)^{-1})\).
 \end{eqnarray*}
 Taking now the absolute value, we obtain
$$
 \left|W\right|\le \sum_{v\in \cV}
 \left|\sum_{k=0}^{K-1}\ep\(k \sum_{j=0}^{s-1}a_j(v+M+j)^{-1})\)\right|.
$$
 
 Recalling Lemma~\ref{eq:Incompl}, we deduce
$$
 \left|W\right|\le \sum_{v\in \cV} 
 \min\left\{K, \frac{p}{\|F_{\vec{a},s}(v)\|}\right\},
 $$
where 
$$
F_{\vec{a},s}(V) = \sum_{j=0}^{s-1}\frac{a_j}{V+M+j}. 
$$
Examining the poles of $F_{\vec{a},s}(v)$,  we see that 
if $\gcd(a_0, \ldots, a_{s-1}, p) =1$ then it is a nonconstant  
rational function of degree $O(s)$ 
modulo $p$. Thus every residue modulo $p$ occurs $O(s)$ times 
among the values $F_{\vec{a},s}(v)$, $v \in \cV$. Hence
$$
 \left|W\right|\ll s \sum_{u = 0}^{p-1}
 \min\left\{K, \frac{p}{\|u\|}\right\} \ll s p \log p
 $$
 which concludes the proof. 
\end{proof} 
 
%


Using Lemma~\eqref{lem:ETK}, 
we immediately obtain:

\begin{cor}
\label{cor:Exp Sum}
For any fixed $s$, the discrepancy $\Delta_{p,s}(M,N)$ of 
points~\eqref{eq:Points} satisfies 
$$
\Delta_{p,s}(M,N) \ll N^{-1}p(\log p)^{s+1},
$$
uniformly over $M$ and $p^2 > N\ge 1$.
\end{cor}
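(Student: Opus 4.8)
The plan is to pass from the exponential sum bound of Theorem~\ref{thm:Exp Sum} to a discrepancy bound via the Erd\"os-Turan-Koksma inequality of Lemma~\ref{lem:ETK}; this is the standard mechanism, and here it applies almost mechanically. First I would match the inner sum in Lemma~\ref{lem:ETK} with $S_{s,p}(M,N;\vec{a})$. Indexing the points~\eqref{eq:Points} by $n = u - M - 1$, so that $\gamma_{n,j} = q_p(u+j-1)/p$ for $j = 1, \ldots, s$, we have
$$
\sum_{n=0}^{N-1} \exp\(2 \pi i \sum_{j=1}^s h_j \gamma_{n,j}\) = \sum_{u=M+1}^{M+N} \ep\(\sum_{j=0}^{s-1} h_{j+1} q_p(u+j)\) = S_{s,p}(M,N;\vec{a}),
$$
where $a_j = h_{j+1}$ for $j = 0, \ldots, s-1$.

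The one point that requires care is the choice of the cut-off parameter $H$, since Theorem~\ref{thm:Exp Sum} is available only when $\gcd(a_0, \ldots, a_{s-1}, p) = 1$. I would take $H = p-1$, so that every vector in the summation range has all coordinates satisfying $|h_j| \le p-1 < p$. Then no nonzero coordinate is divisible by the prime $p$, and since $\vec{h} \ne \vec{0}$ forces at least one coordinate to be nonzero, we get $\gcd(h_1, \ldots, h_s, p) = 1$ for every term. Consequently Theorem~\ref{thm:Exp Sum} applies uniformly and bounds each inner sum by $O(sp\log p)$, independently of $\vec{h}$.

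With this uniform bound the weighted sum in Lemma~\ref{lem:ETK} factorises. I would pull $sp\log p$ out and estimate
$$
\sum_{0 < |\vec{h}| \le H} \prod_{j=1}^s \frac{1}{|h_j|+1} \le \prod_{j=1}^s \(\sum_{|h| \le H} \frac{1}{|h|+1}\) \ll (\log H)^s \ll (\log p)^s,
$$
since each one-dimensional factor equals $1 + 2\sum_{h=1}^{H}(h+1)^{-1} \ll \log H$. Substituting everything into Lemma~\ref{lem:ETK} yields
$$
\Delta_{p,s}(M,N) \ll \frac{1}{H} + \frac{1}{N}\, sp\log p \,(\log p)^s \ll \frac{1}{p} + \frac{sp(\log p)^{s+1}}{N}.
$$

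Finally I would absorb the term $1/p$. Because the statement is restricted to the range $N < p^2$, we have $sp(\log p)^{s+1}/N \ge s(\log p)^{s+1}/p \ge 1/p$, so the second term dominates; treating $s$ as fixed (so that the factor $s$ enters the implied constant) gives $\Delta_{p,s}(M,N) \ll N^{-1} p (\log p)^{s+1}$, as claimed. I do not expect any genuine obstacle: the argument is a routine synthesis of the two stated ingredients, and the only subtlety is the choice $H = p-1$, which simultaneously keeps the $1/H$ term below the target and secures the coprimality hypothesis needed to invoke Theorem~\ref{thm:Exp Sum} on every term of the Erd\"os-Turan-Koksma sum.
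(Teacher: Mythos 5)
Your proposal is correct and is exactly the argument the paper intends: the paper derives the corollary directly from Theorem~\ref{thm:Exp Sum} via Lemma~\ref{lem:ETK} with no further ideas, and your choice $H=p-1$ (securing both the coprimality hypothesis and the absorption of the $1/H$ term) together with the $(\log H)^s$ estimate for the weight sum is the standard and intended completion of that one-line derivation.
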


\subsection{Linear complexity}

Here we estimate the linear complexity for a sufficiently long
sequence of consecutive values of $q_p(u)$.

\begin{theorem}
\label{thm:LCN}
For  $p^2> N \ge 1$ the linear complexity $L_p(N)$ of the sequence 
$q_p(u)$, $u =0, \ldots, N-1$, satisfies
$$
L_p(N) \ge \frac{1}{2} \min\{p-1, N-p-1\}.
$$
\end{theorem}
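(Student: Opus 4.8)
The plan is to argue by contradiction over the ring $\cR=\F_p$ in which the sequence takes its values. Suppose $L=L_p(N)$ is the linear complexity and write out the defining recurrence
$$
q_p(u+L)\equiv\sum_{i=0}^{L-1}c_i\,q_p(u+i)\pmod p,\qquad 0\le u\le N-L-1,
$$
with $c_0,\ldots,c_{L-1}\in\F_p$. The decisive first step is to exploit the shift structure~\eqref{eq:add struct2}: I would apply the recurrence simultaneously at $u$ and at $u+p$ (legitimate whenever $u+p\le N-L-1$) and subtract. Since $q_p(w+p)-q_p(w)\equiv -w^{-1}\pmod p$ for $\gcd(w,p)=1$, every term $q_p(u+i)$ is replaced by $-(u+i)^{-1}$, so the recurrence for $q_p$ collapses into a recurrence
$$
\sum_{i=0}^{L}d_i(u+i)^{-1}\equiv 0\pmod p,\qquad d_L=1,\ d_i=-c_i,
$$
for the far more transparent sequence $u\mapsto u^{-1}\bmod p$, valid for all \emph{good} $u\in[0,N-L-p-1]$, namely those for which none of $u,u+1,\ldots,u+L$ is divisible by $p$.

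The second step is to clear denominators. Multiplying by $\prod_{i=0}^{L}(u+i)$ turns the relation into $P(u)\equiv 0\pmod p$, where $P(u)=\sum_{i=0}^{L}d_i\prod_{j\ne i}(u+j)$ has degree at most $L$. I would then verify that $P$ is not the zero polynomial: evaluating at $u=-m$ kills every summand except $i=m$, giving $P(-m)=d_m\prod_{j\ne m}(j-m)=d_m(-1)^m m!(L-m)!$, and since $L<p$ the factorials are invertible, so $P\equiv 0$ would force every $d_m=0$, contradicting $d_L=1$. Hence $P$ is a nonzero polynomial over $\F_p$ of degree $\le L$, and therefore has at most $L$ roots.

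The final step is a counting argument producing strictly more than $L$ distinct residues at which $P$ vanishes. Under the contradiction hypothesis $2L<p-1$ and $2L<N-p-1$ (the stated bound being vacuous when $N\le p+1$), I would split into the cases $N\ge 2p$ and $p+1<N<2p$. In the first, the integers $u=1,\ldots,p-L-1$ lie in the admissible range and are good, yielding $p-L-1>L$ distinct residues; in the second, the integers $u=1,\ldots,N-L-p-1$ serve the same role, since $2L<N-p-1$ guarantees there are more than $L$ of them and $N<2p$ keeps them all below $p$ with good windows $[u,u+L]\subseteq[1,p-1]$. Either way $P$ vanishes at more than $L$ residues, contradicting its degree, and the theorem follows.

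I expect the main obstacle to be the bookkeeping in this last step: tracking which $u$ simultaneously lie in $[0,N-L-p-1]$ and avoid a multiple of $p$ in the window $[u,u+L]$, and confirming that in each regime one still harvests strictly more than $L$ distinct residues. The algebraic core — the reduction to the inverse sequence and the nonvanishing of $P$ — is clean; the constant $1/2$ and the term $\min\{p-1,N-p-1\}$ arise precisely from balancing the two counting constraints $2L<p-1$ and $2L<N-p-1$.
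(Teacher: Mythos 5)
Your proposal follows essentially the same route as the paper's proof: shift the recurrence by $p$, use~\eqref{eq:add struct2} to collapse it to the relation $\sum_{j} c_j (u+j)^{-1} \equiv 0 \pmod p$ on an interval of admissible $u$, clear denominators, and compare the number of roots with the degree $L$ of the resulting polynomial. Your nonvanishing check via $u=-m$ is in fact slightly more careful than the paper's (which suggests substituting $u=0$, a point that really requires evaluating at $u=-L$ where the coefficient $c_L=-1$ is known to be nonzero), but the argument is otherwise the same.
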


\begin{proof} Assume that
\begin{equation}
\label{eq:Rec RelN}
\sum_{j=0}^L c_j q_p(u+j) \equiv 0 \pmod p, \qquad 
0\le u\le N-L-1,
\end{equation}
for some integers $c_0, \ldots, c_{L-1}$ and $c_L = -1$. 
Let $R = \min\{p-L, N-L-p\}$.
Then we see from~\eqref{eq:Rec RelN} that for $1 \le u \le R-1$ we have
\begin{equation}
\label{eq:Rel1N}
\sum_{j=0}^L c_j q_p(u+p+j) \equiv 0 \pmod p. 
\end{equation}

Recalling~\eqref{eq:add struct2} and using~\eqref{eq:Rec RelN} again, 
we now see that 
\begin{equation}
\begin{split}
\label{eq:Rel2N}
\sum_{j=0}^L c_j q_p(u+p+j)  \equiv 
\sum_{j=0}^L c_j &\(q_p(u+j) - (u+j)^{-1}\)  \\
 & \equiv  - \sum_{j=0}^L c_j  (u+j)^{-1} \pmod p.
\end{split}
\end{equation}
Comparing~\eqref{eq:Rel1N} and~\eqref{eq:Rel2N} we see that 
$$
 \sum_{j=0}^L c_j  (u+j)^{-1}  \equiv 0 \pmod p, \qquad 1 \le u \le R-1.
$$
We can assume that $L < p$ since otherwise there is nothing to prove. 
Clearing the denominators, we obtain a nontrivial polynomial congruence
$$
 \sum_{j=0}^L c_j \prod_{\substack{h=0\\ h \ne j}}^L (u+h) \equiv 0 \pmod p,
$$
of degree $L$, which has $R-1$ solutions (to see that it is nontrivial 
it is enough to substitute $u=0$ in the polynomial on the left hand side).
Therefore $L \ge R-1$ and the result follows.
\end{proof}

The argument used in the proof of Theorem~\ref{thm:LCN} can also
be used to estimate the linear complexity of arbitrary segments
of the sequence $q_p(u)$, although the resulting bound is slightly weaker.

\begin{theorem}
\label{thm:LCMN}
For $M$ and $p^2> N \ge 1$ the linear complexity $L_p(M;N)$ of the sequence 
$q_p(u)$, $u =M+1, \ldots, M+N$, satisfies
$$
L_p(M;N)\ge \min\left\{ \frac{p-1}{2} , \frac{N-p-1}{3}\right\}.
$$
\end{theorem}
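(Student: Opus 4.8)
The strategy is to mimic the proof of Theorem~\ref{thm:LCN} as closely as possible, but to handle the fact that the index $u$ now ranges over a shifted interval $[M+1, M+N]$ rather than starting at $0$. The key algebraic input is unchanged: I start from a putative linear recurrence
\begin{equation}
\label{eq:Rec RelMN}
\sum_{j=0}^L c_j q_p(u+j) \equiv 0 \pmod p, \qquad M+1 \le u \le M+N-L,
\end{equation}
with $c_L = -1$, and I want to extract a contradiction unless $L$ is large. As before, I apply the relation~\eqref{eq:add struct2} to shift the argument by $p$: for those $u$ such that both $u$ and $u+p$ lie in the valid range of~\eqref{eq:Rec RelMN}, subtracting the recurrence at $u+p$ from the one at $u$ kills the Fermat-quotient terms and leaves a congruence purely in the inverses,
\begin{equation}
\label{eq:invsMN}
\sum_{j=0}^L c_j (u+j)^{-1} \equiv 0 \pmod p.
\end{equation}
Clearing denominators turns this into a nontrivial polynomial congruence of degree $L$ modulo $p$, and the number of valid $u$ gives a lower bound on the number of roots, hence on $L$.

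The one genuinely new point — and where I expect the bookkeeping to produce the weaker constant $3$ in place of $2$ — is the counting of admissible shifts. In Theorem~\ref{thm:LCN} the range began at $u=0$, so shifting by $p$ cost essentially one length-$p$ block. Here I also must ensure that $u+p+j$ never hits the forbidden residues where~\eqref{eq:add struct2} fails, i.e.\ that none of $u+p, \ldots, u+p+L$ is divisible by $p$, and simultaneously that the shifted index $u+p$ still lies in the window where the recurrence~\eqref{eq:Rec RelMN} holds. Because the starting point $M+1$ is arbitrary, the interval $[M+1, M+N]$ may contain up to one extra multiple of $p$ to avoid compared to the anchored case, and the overlap between the original window and its $p$-shift is shorter. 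Tracking these constraints, I expect to obtain roughly $N - p - L$ usable values of $u$, subject also to $u+j \not\equiv 0 \pmod p$ throughout, which after discarding at most one bad block of length about $L$ leaves on the order of $N-p$ values distributed so that the effective root count is $\min\{p, N-p\}/3$ rather than $/2$.

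Concretely, I would set $R = \min\{p-L,\, N-p-L\}$ (or a comparable quantity) and argue that~\eqref{eq:invsMN} holds for at least $R$ values of $u$ in an arithmetic progression avoiding the poles, so the degree-$L$ polynomial $\sum_{j=0}^L c_j \prod_{h\ne j}(u+h)$ has at least $R$ roots modulo $p$; nontriviality follows exactly as before by substituting a value that isolates the leading coefficient. This forces $L \ge R$, and unwinding the definition of $R$ under the hypothesis $L < p$ yields $L_p(M;N) \ge \min\{(p-1)/2,\, (N-p-1)/3\}$. The main obstacle is purely combinatorial: getting the interval endpoints and the pole-avoidance conditions to line up so that the number of simultaneously valid $u$ is provably at least the claimed bound, and verifying that the factor of $3$ (rather than $2$) is the honest cost of the arbitrary offset $M$. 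No new number-theoretic ingredient beyond~\eqref{eq:add struct2} and the root count for polynomials modulo $p$ is needed.
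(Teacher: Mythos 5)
Your approach is the same as the paper's: shift the recurrence by $p$, use~\eqref{eq:add struct2} to cancel the Fermat-quotient terms and reduce to $\sum_{j=0}^{L} c_j (u+M+j)^{-1} \equiv 0 \pmod p$, clear denominators, and count roots of the resulting degree-$L$ polynomial. All the ideas are present, and you correctly identify the two sources of loss (the shortened overlap of the recurrence's window with its $p$-shift, and the block of $L+1$ forbidden residues $u \equiv -M-j \pmod p$, which for general $M$ can land anywhere inside the usable window). The one place the write-up goes astray is the final count. The paper takes $R=\min\{p,\,N-L-p\}$ for the length of the overlap and then \emph{separately} discards the at most $L+1$ pole residues, so the inverse congruence holds for at least $R-L-1$ values of $u$ and the conclusion is $L \ge R-L-1$; in the branch $R=N-L-p$ this reads $3L\ge N-p-1$, which is exactly where the denominator $3$ comes from. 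Your concrete choice $R=\min\{p-L,\,N-p-L\}$ combined with $L\ge R$ folds in only one of the two $L$-sized losses and would give $L\ge (N-p)/2$ in the second branch --- a count the argument does not support --- while the sentence about an ``effective root count $\min\{p,N-p\}/3$'' does not parse as a step of the proof. Once the count is corrected to $R-L-1$ with $R=\min\{p,\,N-L-p\}$, everything closes exactly as you describe; your suggestion to certify nontriviality of the cleared polynomial by isolating the coefficient $c_L=-1$ (substituting $u=-M-L$) is sound, and in fact safer than relying on $c_0$.
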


\begin{proof} Assume that
\begin{equation}
\label{eq:Rec RelMN}
\sum_{j=0}^L c_j q_p(u+M+j) \equiv 0 \pmod p, \qquad 
1\le u\le N-L,
\end{equation}
for some integers $c_0, \ldots, c_{L-1}$ and $c_L = -1$. 
Let $R = \min\{p, N-L-p\}$.
Then we see from~\eqref{eq:Rec RelMN} that for $1 \le u \le R$ we have
\begin{equation}
\label{eq:Rel1MN}
\sum_{j=0}^L c_j q_p(u+M+p+j) \equiv 0 \pmod p. 
\end{equation}

Recalling~\eqref{eq:add struct2} and using~\eqref{eq:Rec RelMN} again, 
we now see that for any integer $u$ with 
$u \not \equiv -M - j \pmod p$, $j = 0, \ldots, L$,  we have
\begin{equation}
\begin{split}
\label{eq:Rel2MN}
\sum_{j=0}^L c_j q_p(u+M+p+j)  \equiv 
\sum_{j=0}^L c_j &\(q_p(u+M+j) - (u+M+j)^{-1}\)  \\
 & \equiv  - \sum_{j=0}^L c_j  (u+M+j)^{-1} \pmod p.
\end{split}
\end{equation}
Comparing~\eqref{eq:Rel1MN} and~\eqref{eq:Rel2MN} we see that 
$$
 \sum_{j=0}^L c_j  (u+M+j)^{-1}  \equiv 0 \pmod p, 
$$
for at least  $R-L-1$ values of $u$ with
$$1 \le u \le R\mand u \not \equiv -M - j \pmod p, \ j = 0, \ldots, L.
$$

As before we can assume that $L < p$ since otherwise there is nothing to prove. 
Clearing the denominators, we obtain a nontrivial polynomial congruence
$$
 \sum_{j=0}^L c_j \prod_{\substack{h=0\\ h \ne j}}^L (u+M+h) \equiv 0 \pmod p
$$
of degree $L$, which has at least $R-L-1$ solutions (to see that it is nontrivial 
it is enough to substitute $u=-M$ in the polynomial on the left hand side).
Therefore $L \ge R-L-1$ and the result follows.
\end{proof}

\section{Hash Functions from Fermat Quotients}

\subsection{General Construction} 

In this section we propose a new construction of hash functions based on iterations of Fermat quotients. A similar  idea, however based on 
a very different family of functions,  has 
been previously introduced 
by D.~X.~Charles, E.~Z.~Goren and
K.~E.~Lauter~\cite{CGL}.

Let $n$ and $r$ be two positive integers. Choose $2^r$  random 
$(n+1)$-bit primes  $p_0,\ldots,p_{2^r-1}$. 
We also consider a random initial $n$ bit integer ${u}_0$.

The has function is built from a sequence of iterations of Fermat 
quotients moduli $p_0,\ldots,p_{2^r-1}$.
As in~\cite{CGL}, the input of the hash function is used to decide what 
modulo what prime the next Fermat quotient is computed. More precisely,
given an input bit string $\Sigma$, we perform the following steps:

\begin{itemize}
\item   Pad $\Sigma$ with at most $r-1$ zeros on the left to make
sure that its length $L$ is a multiple of $r$.

\item Split $\Sigma$ into blocks $\sigma_j$, $j =1, \ldots,J$, 
where $J = L/r$, of length $r$ and interpret each block
as an integer $\ell \in [0, 2^r-1]$.

\item Starting at the point $u_0$, apply the Fermat quotient
maps
$q_{p_\ell}$ iteratively by using $n$ least significant bits 
of $u_{j-1}$ to form an $n$-bit integer $w_{j-1}$ and then 
computing
$$
u_{j} =  q_{p_\ell}(w_{j-1}).
$$
  
\item Output the last element in the above sequence, that is, $u_J=q_{p_J}(w_{J-1})$ and outputing its $n$ least significant bits  as the value of the hash function.
\end{itemize}

\subsection{Collision Resistance}

We remark  that  the initial element $u_0$ 
is fixed and in particular, does not depend on the input of the hash function. Furthermore, the collision resistance is based on the difficulty of making the decision which
Fermat quotient to apply at each step when one attempts to back trace 
from a given output to the initial element  $u_0$ and thus
produce two distinct strings $\Sigma_1$ and
$\Sigma_2$ of the same length $L$,  with the same output. 

Note that for strings of different lengths, say of $L$ and $L+1$,  a collision can easily be created. It is enough to take 
$\Sigma_2 = (0, \Sigma_1)$ (that is, $\Sigma_2$ is obtained
from $\Sigma_1$ by augmenting it by $0$). If $L \not \equiv 0 \pmod r$
then they lead to the same output. 
Certainly any  practical implementation has to take care of 
things like this. 

We also note that the results of Section~\ref{eq:pseudo}
suggest that the above hash functions exhibit rather 
chaotic behaviour, which close to the behaviour of
a random function.  It is probably too early to make 
any suggestions about the applicability of Fermat quotients
for hashing but this direction definitely deserves further 
studying, experimentally and theoretically.

\section{Comments}

Unfortunately we are not able to give any estimates 
on the discrepancy or linear complexity of the 
orbits~\eqref{eq:FermDyn}, which is a very interesting
but possibly hard, question.

Obtaining analogues of Theorems~\ref{thm:Exp Sum}, 
\ref{thm:LCN} and~\ref{thm:LCMN}, 
which are nontrivial for $N < p$ is another interesting question. 

The method of proof of Theorems~\ref{thm:LCN} 
and~\ref{thm:LCMN} does not apply 
to the {\it nonlinear complexity\/}. We recall 
the nonlinear complexity of degree $d$ of 
an $N$-element sequence $s_0, \ldots, s_{N-1}$  of elements in
a ring $\cR$
is the smallest  $L$ o such that  
$$
s_{u+L}=\psi(s_{u+L-1},\ldots,s_u), \qquad 
0\le u\le N-L-1,
$$
where $\psi\in\cR[Y_1,\ldots,Y_L]$ is a polynomial of 
total degree at most $d$. Estimating the nonlinear complexity
of Fermat quotients is of ultimate interest. 

Finally, we remark that one can also study 
the sums
$$
T_{p}(M,N;\chi) =
\sum_{u=M+1}^{M+N} \chi\(q_p(u)\)
$$
with a nonprincipal multiplicative character $\chi$ modulo $p$. 
Arguing as in the proof of Theorem~\ref{thm:Exp Sum} 
we get 
$$
|T_{p}(M,N;\chi)| \ll
\sum_{v= M+1}^{M+p-1}
 \left|\sum_{k=0}^{K-1}\chi\(q_p(v+M) -k (v+M)^{-1})\)\right| + p, 
$$
where $K = \fl{N/p}$. 
One can now apply the  Burgess bound, 
see~\cite[Theorems~12.6]{IwKow},
and get a nontrivial estimate on $T_{p}(M,N;\chi)$, 
starting with $N \ge p^{5/4 + \varepsilon}$ for 
any fixed $\varepsilon > 0$, see~\cite{Shp2}.  
However it is natural to expect 
that one can take advantage of additional
averaging over $v$ and get a nontrivial bound for smaller
values of $N$. Furthermore, using~\eqref{eq:add struct1}
it is possible to estimate bilinear character sums
$$
W_p(\cA, \cB, U,V;\chi) = \sum_{0 \le u \le U} \sum_{0 \le v \le V}
\alpha_u \beta_v \chi\(q_p(uv)\)
$$
with arbitrary complex weights $\cA = \(\alpha_u\)$ 
and $\cB = \(\beta_v\)$, and then using 
the Vaughan identity, see~\cite[Section~13.4]{IwKow},
estimate the character sums with Fermat quotients at  primes
arguments, see~\cite{Shp2} for details. 

Furthermore,  we remark that studying  the map 
$x \mapsto (x^{p-1} - 1)/p$ in the field of $p$-adic numbers, 
is also of great interest,  see~\cite{SmWo}  where a similar 
question is considered for the maps 
given by~\eqref{eq:L fun}. The other way around, it is also quite 
natural to study the map~\eqref{eq:L fun} modulo $p$.  

Finally, analogues of Fermat quotients modulo a composite 
number is certainly an exciting object of study with its
own twists, see~\cite{Agoh,ADS,BLS,Dilch}.

\end{document}